\newtheorem{theorem}{Theorem}[section]
\newtheorem{definition}[theorem]{Definition}
\newtheorem{lemma}[theorem]{Lemma}
\newtheorem{corollary}[theorem]{Corollary}
\newenvironment{proof}
{\noindent
{\bf Proof.}}
{\hfill $\square$\medskip

}
\renewcommand{\(}{\begin{equation}}
\renewcommand{\)}{\end{equation}}
\newcommand{\id}{\operatorname{Id}}
\newcommand{\R}{\mathbb{R}}
\newcommand{\Q}{\mathbb{Q}}
\newcommand{\LL}{\mathbb{L}}
\newcommand{\N}{\mathbb{N}}
\newcommand{\Z}{\mathbb{Z}}
\newcommand{\E}{\mathbb E}
\newcommand{\calS}{{\mathcal S}}
\newcommand{\calL}{{\mathcal L}}
\newcommand{\bG}{{\bf G}}
\newcommand{\bg}{{\bf g}}
\newcommand{\bP}{{\bf P}}
\newcommand{\bbP}{{\bf \bar P}}
\newcommand{\bp}{{\bf p}}
\newcommand{\bq}{{\bf q}}
\newcommand{\bh}{{\bf h}}
\newcommand{\bd}{{\bf d}}
\newcommand{\bR}{{\bf R}}
\newcommand{\bA}{{\bf A}}
\newcommand{\bbA}{{\bf \bar A}}
\newcommand{\ba}{{\bf a}}
\newcommand{\bB}{{\bf B}}
\newcommand{\bE}{{\bf E}}
\newcommand{\be}{{\bf e}}
\newcommand{\bgl}{\bg_\lambda}
\newcommand{\bgL}{\bg_\Lambda}
\newcommand{\bgS}{\bg_\Sigma}
\begin{document}

\title{H{\"o}lder Regularity of Geometric Subdivision Schemes}

\author{T.~Ewald \and U.~Reif \and M.~Sabin}

\date{\today}

\maketitle

\begin{abstract}
We present a framework for analyzing non-linear $\R^d$-valued subdivision schemes which are
geometric in the sense that they commute with similarities in $\R^d$.
It admits to establish $C^{1,\alpha}$-regularity for arbitrary schemes of this type,
and $C^{2,\alpha}$-regularity for an important subset thereof, which includes all
real-valued schemes.
Our results are constructive in the sense that they can be verified 
explicitly for any scheme and any given set of initial data
by a universal procedure. This procedure can be executed automatically and rigorously
by a computer when using interval arithmetics.
\end{abstract}

{\bf Keywords} Non-linear subdivision, geometric subdivision, H\"older regularity, circle-preserving scheme\\

{\bf Mathematics Subject Classification (2010)} 26A16, 68U07

\section{Introduction}
Univariate subdivision schemes define a function or curve as the limit of a 
refinement process, starting from a 
sequence of point positions usually called the `control polygon'.   
Since the first introduction of such schemes a question of great interest has been 
`Under what conditions will the limit curve exist, and what H\"older continuity will it have?'.  
Techniques have been honed since the first papers \cite{StandardDyn91,StandardCavaretta91}, 
and there is now a standard approach 
which can be applied to any newly proposed subdivision scheme which is linear 
(the new points are defined by specific linear combinations of old ones), uniform 
(the linear combination coefficients are the same at each part of the sequence) and 
stationary (the coefficients are the same at all steps), see \cite{StandardDyn02} for
a survey or \cite{Sabin:2010} for a comprehensive exposition. 
While linear subdivision is meanwhile well understood,
non-linear algorithms have gained some interest in recent years. 
The variety of different schemes may be grouped as follows:

First, there are 
{\em manifold-valued schemes} where non-linearity comes from adapting
a linear scheme to the special structure of the space carrying the data. 
Today, this class of schemes is fairly well understood, see, for instance,
\cite{ManWallner05,ManDonoho2005,ManWallner06,NonlinYu07,ManGrohs08,ManGrohs10,ManWallner11}.
Using the concept of proximity, it is shown that, roughly speaking, regularity of
the linear scheme is inherited by the non-linear scheme.

Second, there are non-linear {\em real-valued schemes} where standard linear averaging rules
are replaced by more general procedures. 
For instance, in \cite{Goldman2008}, arithmetic means
are replaced by geometric means. Other examples include schemes based on
median-interpolating polynomials \cite{Donoho2000,Oswald2004,NonlinYu05}, 
interpolating rational functions \cite{Kuijt99}, or
interpolating circles \cite{BspFloater13}. 
For all these schemes, some specialized smoothness analysis is
available. More general arguments can be found in
\cite{NonlinDaubechies04,QuasiOswald03,QuasiDyn03}. Still, the verification of the 
conditions given there seems to be rather intricate in a specific setting. 

Third, and this is the class of algorithms that inspired this paper, there are
{\em geometric subdivision schemes} for generating planar or spatial curves with rules motivated by 
some geometric considerations.
The first paper in this direction is probably \cite{DeBoor:1987}, where it is shown that
`cutting corners always works'. Later on, in \cite{BspSabin05}, a variant of the four-point
scheme is suggested for curve design. Here, locally interpolating polynomials defining the linear
four-point scheme are replaced by
interpolating circles to obtain a circle-preserving scheme. 
In the same spirit, a geometric modification of the
Lane-Riesenfeld algorithm is developed in \cite{BspCashman2013}.
Another non-linear variant of the four-point scheme can be found in \cite{BspFloater09}, where
the parametrization underlying the local interpolation is adapted to account for an uneven spacing of control points.

In this paper, we develop a general framework for the analysis of geometric subdivision
schemes. Essentially, these schemes are characterized by the fact that they commute with
similarity transformations. In particular, the schemes in \cite{BspSabin05,BspCashman2013,BspFloater09}
are covered, but also some real-valued schemes, as monotonicity preserving subdivision \cite{Kuijt99}
or median-interpolating subdivision \cite{Donoho2000}  and its generalizations \cite{NonlinYu05}. 
It is not applicable to manifold-valued schemes or to Goldman's algorithm \cite{Goldman2008} because
these schemes are not invariant with respect to similarities.
Unlike the truly geometric $G^1$-analysis in \cite{Dyn12}, which relies on the decay of angles in the sequence 
of control polygons, our approach is parametric. That is, it aims at establishing H{\"o}lder continuity of 
a special parametrization of the limit curve,
which is in some sense uniform. For the schemes to be considered this means that linear sequences
of control points have to be mapped to linear sequences with half spacing. This rules out
a treatment of de~Rham's scheme with variable cutting ratio \cite{deRham:1956}, except for the special case of
Chaikin's algorithm \cite{Chaikin:1974}. 
Still, we claim that our approach is fairly general and covers a broad
class of algorithms in a systematic and constructive way.

Anticipating subsequent denotation, control polygons and control points will now be referred
to as {\em chains} and {\em points}, respectively.
While standard analysis relies on the asymptotics of certain higher order differences, our
approach is based on studying the rate of decay of {\em relative distortion}. This
quantity measures the deviation of groups of points from linear behavior and
is invariant with respect to similarities. We will show how to establish parameters $\alpha$ and $\delta$
with the following property: if the relative distortion falls below $\delta$, then
it decays at some rate towards zero which guarantees that the limit curve is at least $C^{1,\alpha}$.
Once $\delta$ is known, any given chain can be checked for compliance. If so, the
limit curve is known to be $C^{1,\alpha}$. Otherwise, a few subdivision iterates can be computed, and
then these followers may be checked again. Thus, first order H{\"o}lder regularity
can be established by a sequence of steps which are, at least in principle, implementable as a computer
program. Rigor can be guaranteed when using interval arithmetics.
Proximity to some linear scheme, as crucial for the analysis of manifold valued subdivision,
is also used for our analysis, but in a rather unspecific way. For instance, we may employ 
cubic B-spline subdivision as linear reference for the $C^{1,\alpha}$-analysis of any primal scheme.
A subdivision scheme will be called {\em locally linear} if
its derivative evaluated at a linear chain
gives rise to a linear subdivision scheme, called its {\em linear companion}. 
For such schemes, we are able to show that H{\"o}lder regularity up to second order
is inherited from the linear companion. This result applies in particular to any
real-valued algorithm within the class of GLUE-schemes, as defined below.

The paper is organized as follows:
In Section~2 we introduce some basic notation used in the body of the paper.   
Also the concept of {\em relative distortion} is defined as an analog to second 
differences in linear standard theory.

In Section~3 we define a class of geometric subdivision schemes, called GLUE-schemes,
to which our subsequent analysis applies. The four letters of the acronym address
schemes being 
{\bf g}eometric (i.e., commute with similarities), 
{\bf l}ocal (i.e., new points depend only on a fixed number of old ones), 
{\bf u}niform (i.e., the same rules apply everywhere), and 
{\bf e}quilinear (i.e., linear chains are mapped to linear chains with half spacing). 
Here, we limit ourselves to binary schemes, 
but the generalization to arbitrary arity is a straightforward one, which the reader is 
encouraged to carry out.

In Section~4 the concept of {\em straightening} is introduced. It is used to
quantify the decay of relative distortion as subdivision proceeds.
In particular, we establish tools for determining a neighborhood 
of linear chains in which straightening at a certain geometric rate can be
guaranteed.

In  Section~5 the three different manifestations of straightening are related
to continuity, differentiability, and first order H{\"o}lder regularity
of limit curves with respect to a natural uniform parametrization. 

In Section~6 we enhance our results by examining the derivative of the
subdivision map at the standard linear chain. In general, this derivative
corresponds to two linear subdivision schemes, related in some sense 
to the behavior of the GLUE-scheme in tangential and normal direction. 
We show that the worse of these two schemes determines its first order H{\"o}lder 
regularity. If, as for circle-preserving subdivision \cite{BspSabin05}, the
two linear schemes coincide or if the scheme is real-valued, 
this single linear scheme bequeaths even {\em second order} H{\"o}lder regularity.

As an example, we consider the scheme which motivated this analysis, 
the {\em circle preserving subdivision (CPS)} introduced in \cite{BspSabin05}.   
This scheme is a variant of the famous {\em four-point scheme (FPS)} due
to Dubuc \cite{StandardDubuc86}. Usually, FPS is explained by evaluating
an interpolating cubic at its parametric mid-point. However, also the
following three-step procedure gives an accurate description for
the computation of a new point:
First, two interpolating quadratics are determined for the three
leftmost and the three rightmost points out of four consecutive ones.
Second, the second devided differences of the two quadratics are averaged. 
Third, the quadratic interpolating the second and the third point with
the averaged second devided difference is evaluated at the midpoint
to obtain the new point.

This is modified for CPS in the following way:
First, interpolation by quadratics becomes interpolation by circles.
Second, second devided differences of the quadratics is replaced by the curvatures
of circles. Third, the interpolating quadratic is replaced by an interolating circle
with averaged curvature, and 
evaluation at the modpoint is replaced by picking a point on the circle with a
certain distance ratio to the two old neighboring points, see Figure~\ref{fig}.

\begin{figure}
\[
\includegraphics{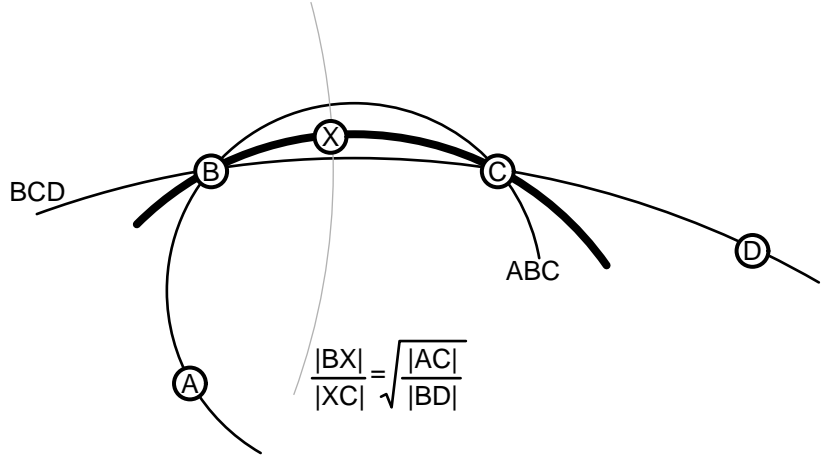}
\]
\caption{$A$, $B$, $C$ and $D$ are four consecutive original points.  
The new point $X$ lies on the bold circle, which has curvature intermediate between the 
circles $ABC$ and $BCD$.  This ensures that the scheme preserves circles. It also lies 
on the locus where $|BX|/|CX| = \sqrt{|AC|/|BD|}$, which ensures that the spacing between 
points tends to uniform locally in the limit.}
\label{fig}
\end{figure}

\section{Setup}
\label{sec:setup}
Even though our analysis is motivated by subdivision schemes generating
curves in two- or three-space, it is equally valid for arbitrary 
space dimensions $d \in \N$, including the real-valued case $d=1$.
Fixing $d$,
we denote Euclidean $d$-space by $\E := \R^d$
and investigate subdivision algorithms acting
on sequences of points in $\E$, called {\em chains}. 
The space of chains with $N \in \N$ points is denoted by $\E^N$.
Chains with at least $n$ points form the set $\E^{\ge n} := \bigcup_{N \ge n} \E^N$.
Joining the points of a chain by straight line segments
yields a polyline, which is also called the {\em control polygon}
of the curve to be generated. Because the polyline structure is
not relevant for our analysis, we prefer to talk about chains.

Points are understood as row vectors, implying that linear
maps are represented by matrices multiplying from the right hand side.
Columns of points yield chains. 
Chains are denoted by upper case bold face letters,
and the corresponding standard lower case letters, tagged with a subscript,
are used for the points. That is,
\[
  \bP = [p_0;\dots;p_{N-1}]\in \E^N
	,\quad
	p_i = [p_{i,0},\dots,p_{i,d-1}] \in \E
\]
Here and below, 
when specifying vectors and matrices, a comma is separating columns, 
while a semicolon is separating rows (as in MATLAB). 
Indices in vectors and matrices always start from $0$.

The {\em length} of the chain $\bP = [p_0;\dots;p_{N-1}] \in \E^{\ge n}$
is the number of its points and denoted by $\# \bP := N$. 
While the length of chains is increasing as subdivision proceeds,
analysis can be based on the study of subchains of a certain fixed
length $n$, called the {\em spread} of the algorithm under consideration.
For $\bP \in \E^{\ge n}$, such subchains are addressed by means of
truncation operators, 
\[
	T^n_i \bP := [p_{i};\dots;p_{i+n-1}] \in \E^n
	,\quad
	0 \le i \le \#\bP-n
	.
\]
The set of all subchains is $T^n \bP := \{T^n_i \bp : 0 \le i \le \#\bP-n \}$.

With $e := [1,0,\dots,0] \in \E$ the first unit vector,
let 
\[
  \bE_N := [e;2e;\dots;Ne]
  ,\quad 
  \be := [e;2e;\dots;ne]
\]
denote the {\em standard linear chains} in $\E^N$ and $\E^n$, respectively. 
Here and throughout, lower case bold face letters are reserved for chains in $\E^n$.
Also, as $n$ is a fixed parameter, we take the liberty of omitting it occasionally 
in the notation of functions or other objects depending on it.

The {\em forward difference operator $\Delta : \E^{\ge n} \to \E^{\ge(n-1)}$} is defined by 
$(\Delta \bP)_i := p_{i+1}-p_i$.
Repeated application yields the chain $\Delta^k \bP \in \E^{\ge(n-k)}$ of differences of
order $k < n$. The points forming this chain are 
\[
  \Delta^k p_i := (\Delta^k \bP)_i
  ,\quad
  i = 0,\dots,\#\bP-k-1
  .
\]
The Euclidean norm on $\E$ and the corresponding inner product  
are denoted by  $\|\cdot\|$ and $\langle\cdot,\cdot\rangle$, respectively.
Let 
\[
	\LL^n := \{\bp \in \E^n: \Delta^2\bp = 0\}
\]
denote the subspace of {\em linear chains} 
with $n$ points, and
$\LL^n_{\rm c}$ its 
orthogonal complement with respect to the inner product
\[
  \langle \bp,\bq \rangle_n := \sum_{i=1}^n \langle p_i,q_i \rangle
\]
on $\E^n$. The orthogonal projection 
$\Pi : \E^n \to \LL^n$ is mapping any subchain $\bp \in \E^n$ to its 
{\em linear component $\bp' := \Pi \bp \in \LL^n$}, i.e., $\bp - \bp' \in \LL^n_{\rm c}$.
Chains all of whose points are coincident are called {\em constant}. These chains, and 
also chains with constant linear component, are degenerate in some sense
and have to be kept away from certain arguments. To this end, we define the sets
\[
	\LL^n_* := \{\bp \in \LL^n : \Delta \bp \neq 0\}
	,\quad
	\E^n_* := \{\bp \in \E^n : \Delta \Pi \bp \neq 0\}
	,
\]
of non-constant linear chains and chains with non-constant linear component,
respectively.
Further, we define a norm and semi-norms on $\E^{\ge n}$ by
\[
  |\bP|_0 := \max_{0 \le i < \#\bP} \|p_i\|
  ,\quad
  |\bP|_j := |\Delta^j \bp|_0
  ,\quad
  j < n
  ,
\]
respectively.
The Euclidean norm of $\Pi$ equals $1$, and its $|\cdot|_0$-norm
can be only $\sqrt{n}$-times bigger, i.e.,
\begin{equation}
\label{eq:Pi}
	|\Pi|_0 := \max_{\bp \neq 0} \frac{|\Pi \bp|_0}{|\bp|_0}
	\le \sqrt{n}
	.
\end{equation}
To measure the deviation of a given chain from a linear one,
we introduce a notion, which will play a prominent role in the following.
\begin{definition}
The ratio
\[
  \kappa(\bp) := \begin{cases} 
    \frac{|\bp|_2}{|\Pi \bp|_1} & \text{if } \bp \in \E^n_* \\
    \infty                  & \text{else} 
  \end{cases}
\]
is called the {\em relative distortion} of $\bp\in \E^n$. 
More generally, we define
\[
  \kappa(\bP) := \max \{\kappa(\bp) : \bp \in T^n\bP\} 
	,\quad
	\bP \in \E^{\ge n}
  .
\]
\end{definition}
Chains $\bp \not\in\E^n_*$ with constant linear part are special
since they would cause a vanishing denominator, i.e.,
$|\Pi \bp|_1 = 0$. The formal setting $\kappa(\bp) = \infty$
will be used only to indicate that such chains do not satisfy
conditions of the form $\kappa(\bp) < \delta$.

A {\em similarity $S = (\varrho,Q,s) : \E \to \E$} is given by a scalar 
{\em scaling factor $\varrho > 0$}, 
an orthogonal {\em transformation matrix $Q\in \R^{d \times d}$},
and a {\em shift vector $s \in \E$}. 
It is acting on points in $\E$ according to $S(p) = \varrho p Q + s$.
The scaling factor $\varrho$ equals the norm of $S$, and we write
$|S| = \varrho$.
The group of similarities in $\E$ is denoted by $\calS(\E)$.
Application of $S$ to 
chains is understood as the application to
all points, i.e., $S(\bP) = [S(p_0); \dots;S(p_{N-1})]$. 
The sets
\[
	\E^n[\delta] := \{\bP\in \E^n : \kappa(\bP) \le \delta\}
	,\quad
	\E^{\ge n}[\delta] := \{\bP\in \E^{\ge n} : \kappa(\bP) \le \delta\}
\]
contain chains
with relative distortion bounded by some $\delta>0$.
The set $\E^n[\delta]$
is a cone in $\E^n$ and hence not compact. 
Similarities may be 
employed to reduce our investigations to some compact set $\Q^n[\delta]$,
to be derived as follows:

First, we note that the action of $\calS(\E)$
on $\LL^n_*$, the space of non-constant linear
chains, is transitive. Further, similarities commute with 
orthogonal projections. Together, these facts imply that
for any $\bp \in \E^n_*$
there exists $S_\bp \in \calS(\E)$ such that 
$\Pi S_\bp(\bp) = S_\bp(\Pi\bp) = \be$. 
The resulting chain $\bq = S_\bp(\bp)$ is called {\em normalized},
and the set of all normalized chains is denoted by
\[
  \Q^n := \{\bq \in \E^n_* : \Pi\bq = \be\}
	.
\]
Second, we have 
\(
\label{eq:rS=r}
  \kappa(S(\bp)) = 
	\frac{|S(\bp)|_2}{|S(\Pi \bp)|_1}
	=
	\frac{|S| |\bp|_2}{|S| |\Pi \bp|_1}
	= \kappa(\bp)
  ,\quad
  S \in \calS(\E)
  ,\
  \bp \in \E^n_*
  .
\)
Equally, $\kappa(\bp) = \infty$ implies $\kappa(S(\bp))=\infty$,
showing that the relative distortion is invariant with respect to
similarities. 
Combining these two observations, we see that
any chain $\bp$
with non-constant linear part is
similar to a normalized chain with equal relative
distortion,
\[
	\bp \in \E^n[\delta]
	\ \Leftrightarrow\
  S_\bp(\bp) \in 
	\Q^n[\delta] := \{\bq \in \Q^n: \kappa(\bq) \le \delta \}
	.
\]
Throughout, the letter $\bq$ is reserved for normalized chains,
and we use the abbreviation 
\[
  \bd := \bq-\be \in \LL^n_{\rm c}
\]
for the non-linear part of $\bq$
without further notice. The relative distortion of 
a normalized chain $\bq \in \Q^n$  is given by 
\(
\label{eq:kappa=d}
	\kappa(\bq) = \frac{|\be + \bd|_2}{|\Pi(\be+\bd)|_1} 
	= \frac{|\bd|_2}{|\be|_1} =
	|\bd|_2
	.
\)
So we recognize the set 
$\Q^n[\delta] = \be + \{\bd\in \LL^n_{\rm c}: |\bd|_2\le \delta\}$ 
as a lower-dimensional $|\cdot|_2$-ball 
centered at the standard linear chain $\be$.
Higher order differences are bounded by lower ones
according to the standard estimate
\(
\label{eq:standard}
  |\bp|_{i+j} \le 2^j |\bp|_i
	,\quad
	\bp \in \E^n
	,\
	i,j \in \N_0
	.
\)
Reverse estimates are possible 
on the subspace $\LL^n_{\rm c}$, where not only $|\cdot|_0$,
but also $|\cdot|_1$ and $|\cdot|_2$
are norms and hence equivalent. More precisely, 
for $j \in \{1,2\}$,
\[
  |\bd|_{j-1} \le \frac{n-j}{2}\, |\bd|_j 
	,\quad
	\bd \in \LL^n_{\rm c}
	.
\]
To show this, we define the matrices $M^1 \in \R^{n,n-1}$ and
$M^2 \in \R^{n-1,n-2}$ by
\[
  M^1_{i,j} = 
	\begin{cases}
	  \frac{j+1}{n}-1& \text{for } i \le j \\[.5ex]
		\frac{j+1}{n}& \text{for } i > j 
	\end{cases}
	,
	\quad
  M^2_{i,j} = 
	\begin{cases}
	  \frac{(j+1) (j + 2) (2 j + 3 - 3 n)}{n(1+n)(1-n)}-1 & \text{for } i \le j \\[.5ex]
		\frac{(j+1) (j + 2) (2 j + 3 - 3 n)}{n(1+n)(1-n)} &  \text{for } i > j 
	\end{cases}
	.
\]
Expediently using a computer algebra system, one verifies that
$\bd = M^1 \Delta \bd$ and $\Delta \bd = M^2 \Delta^2 \bd$
for $\bd \in \LL^n_{\rm c}$. The maximum norms of these matrices
are $|M^1|_\infty = (n-1)/2$ and $|M^2|_\infty = (n-2)/2$, as stated.

For later reference, we define $m := (n-1)/2$ and combine the
above estimates to the slightly weaker
inequality
\(
\label{eq:equinorm}
  |\bd|_i \le m^j |\bd|_{i+j} 
	,\quad
	i+j \le 2
	,\ 
	\bd \in \LL^n_{\rm c}
	.
\)
\section{GLUE-schemes}
In this section, we introduce the class of
subdivision algorithms to be analyzed in this work
and derive some of their basic properties. 
\begin{definition} 
\label{def:GLUE}
Given $m \in \N$, let $n := 2m+1$.
The function $\bG: \E^{\ge n} \to \E^{\ge n}$ defines a
{\em {\bf g}eometric, {\bf l}ocal, {\bf u}niform, {\bf e}quilinear subdivision scheme 
(or briefly GLUE-scheme) in $\E$ with spread $n$}
if $\#\bG(\bP) = 2\#\bP-n+1$ and
if it satisfies the following properties:
\begin{itemize}
\item[{\em (G)}]
$\bG$ commutes with similarities, i.e,
\(
\label{eq:invariant}
  \bG\circ S  = S  \circ \bG
  ,\quad
  S \in \calS(\E)
  .
\)
\item[{\em (L)}]
The points $p'_{2i}$ and $p'_{2i+1}$ of the chain 
$\bP' := \bG(\bP)$ depend only on
$p_i,\dots,p_{i+m}$.
\item[{\em (U)}]
There exist functions $g_0,g_1: \E^{m+1} \to \E$ independent of $i$ such that
\[
  p'_{2i+\lambda} = g_\lambda(p_i,\dots,p_{i+m})
  ,\quad	
  \lambda\in\{0,1\}
	,\
	0 \le i < \#\bP-m
  .
\]
These functions are $C^{1,\nu}$ in a neighborhood of $\bE_{m+1}$
for some $\nu > 0$, called the {\em regularity parameter} of $\bG$.
\item[{\em (E)}]
The standard linear chain $\be$ is scaled down and translated by $\bG$
according to
\(
\label{eq:E}
  \bG(\be) = (\bE_{n+1} +(m+\tau) e)/2
\)
for  some $\tau \in [0,1)$, called the {\em shift} of $\bG$. 
In particular, the scheme is called 
{\em primal} if $\tau=0$, and {\em dual} if $\tau = 1/2$.
\end{itemize}
\end{definition}
Let us briefly comment on this definition:
Commutation with similarities characterizes a subdivision process
which is independent of the scale or the orientation of the given data.
Linear subdivision schemes also commute with any element of the even larger group 
of affinities, provided that the weights sum to $1$. Thus, all these
schemes satisfy property (G).
Property (L) is crucial not only here but also in the standard theory
of linear schemes -- relatively little is known about schemes with 
global support like variational subdivision.
Also assuming that a single pair of rules according to (U) is applied everywhere
is customary as it captures most schemes of practical relevance. 
It is important to note that in (U) smoothness of the functions $g_0,g_1$
is assumed only in a neighborhood of the standard linear chain $\bE_{m+1}$.
Further away, these functions may be even discontinuous.
While reproduction of straight lines is a natural property of a geometric
subdivision algorithm, condition~(E) demands slightly more: equidistant points
on a line have to be mapped to equidistant points. 
As an example, consider the corner cutting scheme with weights $w$ and $1-w$,
as suggested by de~Rham \cite{deRham:1956}. Here,
only the special case $w = 1/4$, also known as 
Chaikin's algorithm \cite{Chaikin:1974},
satisfies property~(E) and thus a crucial prerequisite of our analysis. 

Our example CPS uses always $m+1=4$ old points to generate a new one.
Thus, its spread is $n=7$. The functions $g_0,g_1$ are smooth in a vicinity
of $\bE_4$, implying that the regularity parameter is $\nu=1$.

Repeated application of $\bG$ to the {\em initial chain $\bP$} yields the sequence
\[
  \bP^{\ell} := \bG^\ell(\bP)
  ,\quad
  \ell \in \N_0
  .
\]
Throughout, we assume that the number $N := \#\bP$ of initial points
is not smaller than the spread
of $\bG$, i.e., $N \ge n$. In this case, the length of $\bP^\ell$, given by
\[
  N^\ell := \#\bP^\ell = 2^\ell(N-n+1)+n-1
  ,\quad
  \ell \in \N_0
  ,
\]
is monotonically increasing.
Throughout, and even without explicit declaration,
the symbols $\bG$ and $\bP^\ell$ will represent a GLUE-scheme
and the subdivision iterates of some chain $\bP$, respectively,
according to the above definitions.
The points of $\bP^\ell$ and its
differences of order $k$ are denoted by $p^\ell_i$ and
$\Delta^k p^\ell_i$, respectively.

Property (G) implies invariance of the functions $g_0$ and $g_1$
according to $g_\lambda(S(\bp)) = S(g_\lambda (\bp)),$ $S \in \calS(\E)$.
Given any $s \in \E$, the similarity $S := (1/2,\id,s/2)$ 
satisfies $S(s) = s$. Hence, 
$g_\lambda(s,\dots,s) 
  = S(g_\lambda(s,\dots,s)) = g_\lambda(s,\dots,s)/2 + s/2
$,
showing that constant chains are reproduced,
\[   
  g_\lambda(s,\dots,s) = s
  ,\quad
  s \in \E
  .
\]
According to the representation of linear schemes
in terms of pairs of matrices, we define associated self-maps
$\bg_0,\bg_1:\E^n \to \E^n$ by
\[
  \bg_0(p_0,\dots,p_{n-1}) :=
  \begin{bmatrix}
    g_0(p_{0},\dots,p_{m})\ \\
    g_1(p_{0},\dots,p_{m})\\
    g_0(p_{1},\dots,p_{m+1})\\
    g_1(p_{1},\dots,p_{m+1})\\
  \vdots\\
    g_0(p_{m},\dots,p_{n-1})
  \end{bmatrix}
  ,\quad
  \bg_1(p_{0},\dots,p_{n-1}) :=
  \begin{bmatrix}
    g_1(p_{0},\dots,p_{m})\\
    g_0(p_{1},\dots,p_{m+1})\\
    g_1(p_{1},\dots,p_{m+1})\\
  \vdots\\
    g_0(p_{m},\dots,p_{n-1})\\
    g_1(p_{m},\dots,p_{n-1})
  \end{bmatrix}
  .
\]
Thus,
\[
  T^n_{2i+\lambda}\bG(\bP) = \bg_\lambda(T^n_i\bP)
  ,\quad
  \lambda \in \{0,1\}
  .
\]
More generally, let 
\[
  \calL^\ell := \{0,1\}^\ell
  ,\quad
  \calL := \bigcup_{\ell \in \N} \calL^\ell
  ,
\]
denote the set of {\em index vectors} of length $\ell \in \N$
and arbitrary length, respectively.
For
$\Lambda = [\lambda_1,\dots,\lambda_\ell] \in \calL^\ell$, we write 
$|\Lambda|:= \ell$ for its length, and
\[
  \bg_\Lambda := \bg_{\lambda_\ell} \circ \cdots \circ \bg_{\lambda_1}
\]
for the corresponding composition of the functions $\bg_0,\bg_1$.
Then
\[
  T^n_j \bP^\ell = \bg_\Lambda(T^n_i\bP)
  ,\quad
  \lambda \in \calL^\ell
  ,
\]
where $j = 2^\ell \bigl(i+\sum_{k=1}^{\ell} 2^{-k} \lambda_k\bigr)$.
This means that any subchain at level $\ell$ can be 
represented as the image of a subchain of the initial data,
\[
  T^n\bP^\ell = \{\bgL(\bp) : \bp \in T^n\bP,\ |\Lambda| = \ell\}
  .
\]
The functions $\bgl$ have the following basic properties:
First, (G) and (E) imply
\begin{gather}
\label{eq:inv_bgl}
  \bgL(S(\bp)) = S(\bgL(\bp)) \\
\label{eq:scale_bgl}
  \bgL(\be) = 2^{-\ell}(\be + \tau_\Lambda e)
\end{gather}
for some $\tau_\Lambda \in \R$.
Second, the behavior near $\be$ is characterized as follows:
\begin{lemma}
\label{lem:g(e)}
Denote the derivative of $\bgl$ at $\be$ by
$M_\lambda := D\bgl(\be)$.
Given $\ell \in \N$,
there exists $\delta_\ell>0$ such that,
for any $\Lambda = [\lambda_1,\dots,\lambda_\ell] \in \calL^\ell$, 
the function $\bgL$ is $C^{1,\nu}$ on $\Q^n[\delta_\ell]$.
The derivative of $\bgL$ at $\be$ is given by
$M_\Lambda := D\bgL(\be) = M_{\lambda_\ell} \cdots M_{\lambda_1}$.
In particular, there exists a constant
$c_\ell > 0$ such that
\begin{align}
\label{eq:D1}
  |\bgL(\bq) - \bgL(\be)|_i
  &\le
  c_\ell |\bd|_j
\\
\label{eq:D2}
  |\bgL(\bq) - \bgL(\be) - M_\Lambda\cdot \bd|_i
  &\le
  c_\ell\, |\bd|_j^{1+\nu}
\end{align}
for all $i,j \in \{0,1,2\}$, $\bq \in \Q^n[\delta_\ell]$, and $\Lambda \in \calL^\ell$.
\end{lemma}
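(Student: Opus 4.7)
The plan is to establish the $C^{1,\nu}$-regularity of $\bgL$ and the estimates \eqref{eq:D1}, \eqref{eq:D2} simultaneously by induction on the length $\ell = |\Lambda|$. The crucial leverage is the similarity invariance \eqref{eq:inv_bgl}, \eqref{eq:scale_bgl}: since $\bgL(\be) = 2^{-\ell}(\be + \tau_\Lambda e)$ is the image of $\be$ under the similarity $S_\ell = (2^{-\ell}, \id, 2^{-\ell}\tau_\Lambda e)$, any evaluation of $\bg_\lambda$ near $\bgL(\be)$ can be conjugated back to an evaluation near $\be$, reducing every intermediate step of the composition $\bg_{\lambda_\ell}\circ\cdots\circ\bg_{\lambda_1}$ to the single base neighbourhood supplied by property~(U).

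For the base case $\ell=1$, property~(U) provides an $\E^n$-neighbourhood $U$ of $\be$ on which $\bg_0$ and $\bg_1$ are $C^{1,\nu}$. Since $\Q^n[\delta] = \be + \{\bd \in \LL^n_{\rm c} : |\bd|_2 \le \delta\}$ is a compact ball shrinking to $\{\be\}$ as $\delta\to 0$, one picks $\delta_1>0$ so that $\Q^n[\delta_1]\subset U$. A Taylor expansion with $\nu$-Hölder remainder then yields \eqref{eq:D1} and \eqref{eq:D2} first in the case $i=j=0$, with constants depending only on the $C^{1,\nu}$-data of $\bg_0,\bg_1$ on $U$. The full range $i,j\in\{0,1,2\}$ follows by applying \eqref{eq:standard} to the left-hand sides and \eqref{eq:equinorm} to the right-hand sides; the latter is permitted since $\bd=\bq-\be\in\LL^n_{\rm c}$.

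For the inductive step, write $\bg_{\Lambda'} = \bg_\lambda\circ\bgL$ with $\Lambda'=[\Lambda,\lambda]$. A direct computation using $S_\ell^{-1}(\bgL(\be)+\bh) = \be + 2^\ell\bh$ together with \eqref{eq:inv_bgl} produces the scaling identity
\[
  \bg_\lambda\bigl(\bgL(\be)+\bh\bigr) - \bg_\lambda\bigl(\bgL(\be)\bigr) = 2^{-\ell}\bigl[\bg_\lambda(\be+2^\ell\bh) - \bg_\lambda(\be)\bigr],
\]
whose differentiation at $\bh=0$ gives $D\bg_\lambda(\bgL(\be))=M_\lambda$; the chain rule then delivers $M_{\Lambda'}=M_\lambda M_\Lambda$. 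Shrinking $\delta_{\ell+1}\le\delta_\ell$ so that $2^\ell c_\ell m^2\delta_{\ell+1}$ falls below the radius of $U$, the inductive (D1) guarantees that $\be+2^\ell\bh$ with $\bh:=\bgL(\bq)-\bgL(\be)$ lies in $U$, so the base case applies to $\bg_\lambda$ at that argument.

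The estimates at level $\ell+1$ now follow by assembly. In the Lipschitz bound the factors $2^{-\ell}$ and $2^\ell$ cancel exactly, giving (D1) with constant $c_1c_\ell$. For (D2), split
\[
  \bg_{\Lambda'}(\bq) - \bg_{\Lambda'}(\be) - M_{\Lambda'}\bd = \bigl[\bg_\lambda(\bgL(\bq))-\bg_\lambda(\bgL(\be))-M_\lambda\bh\bigr] + M_\lambda\bigl[\bh - M_\Lambda\bd\bigr];
\]
the first bracket, via the scaling identity, equals $2^{-\ell}$ times a base-case Hölder remainder in $2^\ell\bh$ and so is bounded by $c_1\cdot 2^{\ell\nu}\cdot(c_\ell|\bd|_0)^{1+\nu}$, while the second is controlled by the inductive (D2). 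Combining these and once more invoking \eqref{eq:standard} and \eqref{eq:equinorm} yields (D2) for all $i,j$ with a new constant $c_{\ell+1}$. The main delicate point is the bookkeeping of the scaling factors and the absorption of the $2^{\ell\nu}$ remainder into $c_{\ell+1}$: the constant is forced to degrade with $\ell$, and uniform-in-$\ell$ control --- which this lemma makes no attempt to provide --- is precisely what the straightening estimates of the following sections will have to supply.
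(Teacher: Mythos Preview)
Your proof is correct and rests on the same key idea as the paper: the similarity invariance \eqref{eq:inv_bgl} transports the $C^{1,\nu}$-neighbourhood of $\be$ to a neighbourhood of each intermediate image $\bgL(\be)=2^{-\ell}(\be+\tau_\Lambda e)$, with the derivative of $\bg_\lambda$ there again equal to $M_\lambda$. The paper's argument is more compressed: rather than inducting on $\ell$ with explicit constant-tracking, it observes once that invariance forces $D\bg_\lambda(\varrho\be+t)=M_\lambda$ and $C^{1,\nu}$-regularity near every such point, then appeals directly to the fact that a composition of finitely many $C^{1,\nu}$ maps is $C^{1,\nu}$ and to the chain rule, deducing \eqref{eq:D1}--\eqref{eq:D2} for $i=j=0$ in one stroke before passing to general $i,j$ via \eqref{eq:standard} and \eqref{eq:equinorm}. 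Your inductive assembly with the scaling identity makes the same mechanism explicit and, as you note, exposes how the constants $c_\ell$ degrade in $\ell$; this is extra information the paper does not record but also does not need here.
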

\begin{proof}
By assumption, for all $\bp$ in a neighborhood of $\be$,
there exist linear maps $D\bgl(\bp) : \E^n \to \E^n$
such that
\[
  \lim_{\bh\to 0}
	\frac{|\bgl(\bp+\bh)-\bgl(\bp)-D\bgl(\bp)\cdot \bh|_0}{|\bh|_0} = 0
	.
\]
Hence, for arbitrary scaling factors $\varrho>0$ and shift vectors $t\in \E$,
invariance implies
\[
  \lim_{\bh'\to 0}
	\frac{|\bgl(\bp'+\bh')-\bgl(\bp')-D\bgl(\bp)\cdot \bh'|_0}{|\bh'|_0} = 0
\]
for $\bp' := \varrho \bp + t$ when setting $\bh' := \varrho \bh$.
That is, $\bgl$ is differentiable at $\bp'$ with $D\bgl(\bp') = D\bgl(\bp)$.
In particular, $\bgl$ is $C^{1,\nu}$ in a neighborhood of
$\varrho \be + t$ with $D\bgl(\varrho \be+t) = M_\lambda$.
In view of \eqref{eq:scale_bgl}, also the composed function $\bgL$
is $C^{1,\nu}$ in a neighborhood of $\be$. By the chain rule,
$M_\Lambda = M_{\lambda_\ell} \cdots M_{\lambda_1}$, validating
the two inequalities for $i=j=0$. 
For arbitrary $i,j$, the inequality follows from
applying the estimates \eqref{eq:standard} and \eqref{eq:equinorm}
to the left and right hand side, respectively.
\end{proof}
Here and below, we use the following conventions concerning
constants:
Indexed constants like $c_j,c'_j$
have fixed values,
while $c,c'$ are {\em generic constants} which
may change their value at every appearance.
Lower case constants
depend only on fixed parameters of
the subdivision scheme $\bG$ under consideration (like $n,\nu$, or $d$)
and on some parameter $z$ to be introduced in Section~\ref{sec:regular}.
Capitals like $C,C'$ also denote generic constants,
but play a different role. They appear in estimates on
sequences and may depend on anything but the sequence index.
For instance, in a typical expression like
$\kappa(\bP^\ell) \le C\, 2^{-\ell}, \ell \in \N$,
the constant $C$ may depend on the initial data $\bP^0$, but not on $\ell$.
%
%
\section{Straightening}
\label{sec:straight}
The key to assessing regularity of GLUE-schemes is  
an analysis of the behavior of relative distortion
as subdivision proceeds.
Because relative distortion may be infinite,
we have to deal with sequences with values in $\R \cup \{\infty\}$.
We say that a certain property holds for
{\em almost all} indices of such a sequence 
if there is only a finite number of exceptions. This convention
is useful if we are only interested in the long term behavior
of a sequence and want to avoid a special treatment
of a finite number of trailing infinite values.
In this spirit,
sequences are called {\em essentially bounded} or {\em essentially summable}
if they contain only a finite number of infinite values, and possess that property 
beyond some index $\ell_0$.
To illustrate the necessity of our conventions, 
consider CPS applied to the initial chain $\bP \in \E^7$ formed
by the points of a regular heptagon lying on the unit circle. 
For symmetry reasons, $\Pi \bP = [0;\dots;0]$
is a constant chain located at the origin. Hence, $\kappa(\bP) = \infty$,
while all subsequent iterates $\bP^\ell$ have finite relative distortion.
By our conventions, it makes sense to check the sequence 
$\kappa_\ell(\bP), \ell \in \N_0$
for essential boundedness or summability, disregarding the 
infinite value for $\ell = 0$.
\begin{definition}
\label{def:straight}
Let
\[
  \kappa_\ell(\bP) := \kappa(\bP^\ell)
  ,\quad 
  \ell \in \N
  .
\]
The chain $\bP \in \E^{\ge n}$ is said to be
\begin{itemize}
\item 
{\em straightened by $\bG$} if $(\kappa_\ell(\bP))_{\ell\in \N}$ is a null sequence;
\item 
{\em strongly straightened by $\bG$} if $(\kappa_\ell(\bP))_{\ell\in \N}$ is essentially summable;
\item 
{\em straightened by $\bG$ at rate $\alpha \in (0,1]$} if 
$(2^{\ell\alpha}\kappa_\ell(\bP))_{\ell\in \N}$ is essentially bounded.
\end{itemize}
\end{definition}
Clearly, straightening at some rate $\alpha >0$ implies strong straightening,
and strong straightening implies straightening. The different categories
of straightening are related to a decay of differences of points in
the following way:
\begin{lemma}
\label{lem:halvening}
If the chain $\bP \in \E^{\ge n}$ is 
\begin{itemize}
\item 
straightened by $\bG$, then $|\bP^\ell|_1 \le C q^\ell$ for any $q > 1/2$;
\item
strongly straightened by $\bG$, then $|\bP^\ell|_1 \le Cq^\ell$ for $q  = 1/2$;
\item 
straightened by $\bG$ at rate $\alpha$,
then $|\bP^\ell|_2 \le C\,  2^{-\ell(1+\alpha)}$
\end{itemize}
for some constant $C> 0$ and almost all $\ell \in \N$.
\end{lemma}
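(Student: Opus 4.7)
My plan is to derive, for each subdivision step, the contractive estimate
\[
  |\bP^{\ell+1}|_1 \le \rho(\kappa_\ell(\bP)) \, |\bP^\ell|_1, \qquad \rho(t) := \frac{1/2 + c_1 t}{1 - m t},
\]
and then analyze the product of factors in each regime. Since every length-$n$ subchain of $\bP^{\ell+1}$ is of the form $\bg_\lambda(\bp)$ with $\bp \in T^n \bP^\ell$ and $\lambda\in\{0,1\}$, it will suffice to bound $|\bg_\lambda(\bp)|_1$ for each such $\bp$. Under the standing assumption, to be verified below in each regime, that $\kappa(\bp) < \min(\delta_1, 1/(2m))$, I will normalize via the similarity $S_\bp \in \calS(\E)$ to get $\bq := S_\bp(\bp) \in \Q^n[\delta_1]$ and $\bd := \bq - \be$, with $|\bd|_2 = \kappa(\bp)$ by \eqref{eq:kappa=d}. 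Commutation \eqref{eq:invariant} then yields $\bg_\lambda(\bp) = S_\bp^{-1}(\bg_\lambda(\bq))$, and, since first differences annihilate translations, $|\bg_\lambda(\bp)|_1 = |\Pi\bp|_1 \cdot |\bg_\lambda(\bq)|_1$. Combining $|\bg_\lambda(\be)|_1 = 1/2$ from \eqref{eq:scale_bgl} with Lemma~\ref{lem:g(e)} applied at $\ell=1$, $i=1$, $j=2$, which furnishes $|\bg_\lambda(\bq) - \bg_\lambda(\be)|_1 \le c_1|\bd|_2$, gives $|\bg_\lambda(\bq)|_1 \le 1/2 + c_1 \kappa(\bp)$. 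The norm equivalence $|\Pi\bp|_1 \le |\bp|_1 + m|\bp|_2$ paired with $|\bp|_2 = \kappa(\bp)|\Pi\bp|_1$ yields $|\Pi\bp|_1 \le |\bp|_1/(1-m\kappa(\bp))$, and multiplying these two bounds produces $|\bg_\lambda(\bp)|_1 \le \rho(\kappa(\bp))|\bp|_1$. Since $\rho$ is monotone on $[0, 1/m)$, taking the maximum over $\bp \in T^n\bP^\ell$ establishes the step inequality.

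\textbf{Iteration and case split.} Choosing $\ell_0$ past which $\kappa_\ell(\bP)$ is finite and below the threshold, iteration gives
\[
  |\bP^\ell|_1 \le |\bP^{\ell_0}|_1 \prod_{k=\ell_0}^{\ell-1} \rho(\kappa_k(\bP)).
\]
If $\bP$ is only straightened, then for any $q > 1/2$ one has $\rho(\kappa_\ell) < q$ for all sufficiently large $\ell$, so $|\bP^\ell|_1 \le Cq^\ell$. If $\bP$ is strongly straightened, I will use the linearization $\rho(t) \le (1/2)(1+c't)$ valid for small $t$ to estimate
\[
  \prod_{k=\ell_0}^{\ell-1} \rho(\kappa_k(\bP)) \le 2^{-(\ell-\ell_0)} \exp\Bigl(c'\sum_{k \ge \ell_0} \kappa_k(\bP)\Bigr),
\]
and essential summability bounds the exponential by a constant, giving $|\bP^\ell|_1 \le C 2^{-\ell}$. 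The rate-$\alpha$ assumption yields $\kappa_\ell \le C 2^{-\alpha \ell}$, which is summable, so this same bound applies.

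\textbf{Second differences.} For the rate-$\alpha$ assertion, I will combine the preceding $|\bP^\ell|_1$ bound with the subchain-wise inequality $|\bp|_2 = \kappa(\bp)|\Pi\bp|_1 \le \kappa(\bp)|\bp|_1/(1-m\kappa(\bp))$, which, for small $\kappa_\ell(\bP)$, gives $|\bP^\ell|_2 \le 2\kappa_\ell(\bP)|\bP^\ell|_1 \le C'' 2^{-\ell(1+\alpha)}$, as claimed. The hard part will be the strong straightening step: the per-step contraction is only marginally better than $1/2$, so recovering $\prod \rho(\kappa_k) = O(2^{-\ell})$ from $\sum \kappa_k < \infty$ hinges on the logarithmic linearization of $\rho$ near $0$, and finitely many possibly infinite early values of $\kappa_k$ have to be absorbed into the ``essentially'' qualifier before the iteration is launched.
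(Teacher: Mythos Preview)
Your proof is correct and follows essentially the same route as the paper's: derive the per-step contraction $|\bg_\lambda(\bp)|_1 \le \dfrac{1/2+c_1\kappa(\bp)}{1-m\kappa(\bp)}\,|\bp|_1$ via normalization and Lemma~\ref{lem:g(e)}, iterate the resulting product of factors, and treat the three straightening regimes by (respectively) eventual smallness of $\kappa_\ell$, summability of $\kappa_\ell$ through the linearization $\rho(t)\le\frac12(1+c't)$, and the relation $|\bp|_2=\kappa(\bp)|\Pi\bp|_1$ combined with the strong-straightening bound. The only cosmetic difference is that the paper iterates at the subchain level and then takes the max, whereas you pass to $|\bP^\ell|_1$ first; both are valid.
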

\begin{proof}
Based on $\delta_1$ as defined as in Lemma~\ref{lem:g(e)},
let $\delta_0 := \min(\delta_1, 1/(2m))$. 
By \eqref{eq:equinorm} and \eqref{eq:kappa=d},
$|\bd|_1 \le m|\bd|_2 = m \kappa(\bd) \le 1/2$. Further, by
\eqref{eq:D1},
\[
  \frac{|\bgl(\bq)|_1}{|\bq|_1} 
  \le
  \frac{|\bgl(\be)|_1 + |\bgl(\bq)-\bgl(\be)|_1}{|\be|_1 - |\bd|_1}
  \le
  \frac{1 + 2 c_1 |\bd|_2}{2(1 - m |\bd|_2)}
  =
  \frac{1 + 2 c_1 \kappa(\bq)}{2(1-m\kappa(\bq))}
\]
for $\bq \in \Q^n[\delta_0]$. 
Hence, there exists a constant $c$
such that
\[
  \frac{|\bgl(\bq)|_1}{|\bq|_1} 
  \le
	\frac{1}{2}\,
  (1 + c \kappa(\bq))
  ,\quad
  \bq \in \Q^n[\delta_0]
  .
\]
By invariance under similarities, we obtain
\[
	\frac{|\bgl(\bp)|_1}{|\bp|_1}
  \le 
	\frac{1}{2}\, (1+c\kappa(\bp)) 
  ,\quad
  \bp \in \E^n[\delta_0]
  .
\]
There exists $\ell_0$ such that $\kappa_\ell(\bP) \le \delta_0$
for all $\ell \ge \ell_0$.
For any subchain $\bp^\ell \in T^n \bP^\ell $
at level $\ell\ge \ell_0$ there exists a subchain $\bp^{\ell_0} \in T^n \bP^{\ell_0} $
at level $\ell_0$
and an index vector $\Lambda \in \calL^{\ell-\ell_0}$ such that
$\bp^\ell = \bgL(\bp^{\ell_0})$. This subchain can be estimated by repeated application of
the above inequality,
\[
  |\bp^\ell|_1 \le 
  |\bp^{\ell_0}|_1 \prod_{j=\ell_0}^{\ell-1}
  \frac{1}{2}\,(1+c \kappa_{j}(\bp)) 
  .
\]
Since $\bp^\ell$ was chosen arbitrarily, we obtain also
\[
  |\bP^\ell|_1 \le 
  |\bP^{\ell_0}|_1 \prod_{j=\ell_0}^{\ell-1}
  \frac{1}{2}\,(1+c \kappa_{j}(\bP)) 
  .
\]
First, if $\bP$ is straightened by $\bG$, 
we may choose $\ell_0$ even larger so that
$c\kappa_\ell(\bP) \le 2q-1$
for all $\ell \ge \ell_0$. Hence,
\[
  |\bP^\ell|_1  
	\le
	|\bP^{\ell_0}|_1 q^{\ell-\ell_0}
  ,
\]
verifying the first claim.

Second, if $\bP$ is strongly straightened by $\bG$,
\[
  2^{\ell} |\bP^\ell|_1 
  \le
  2^{\ell_0} |\bP^{\ell_0}|_1 \prod_{j=\ell_0}^{\infty}
  (1+c \kappa_{j}(\bP))
  = C
\]
for all $\ell \ge \ell_0$.
Convergence of the infinite product, and in particular
finiteness of the constant $C$, is guaranteed since the sequence $\kappa_{j}(\bP)$
is essentially summable. 

Third, if $\bP$ is straightened by $\bG$ at rate $\alpha$, we have
\[
  |\bp^\ell|_2 \le C 2^{-\ell\alpha} |\Pi \bp^\ell|_1
	\le C 2^{-\ell \alpha} |\bp^\ell|_1 
	\sup_{\bp \in \E^n[\delta_0]} \frac{|\Pi \bp|_1}{|\bp|_1}
\]
for some constant $C$. Since $\Pi$ commutes with similarities, we have
\(
\label{eq:Pi1}
  \sup_{\bp \in \E^n[\delta_0]} \frac{|\Pi \bp|_1}{|\bp|_1}
	=
	\sup_{\bq \in \Q^n[\delta_0]} \frac{|\Pi \bq|_1}{|\bq|_1}
	=
	\sup_{\bq \in \Q^n[\delta_0]} \frac{|\be|_1}{|\be+\bd|_1}
	\le
	\frac{1}{1-m\delta_0}
	\le 2
	.
\)
Thus, using the already proven result on strongly straightened chains,
we find
\[
  |\bP^\ell|_2 
	\le C 2^{-\ell \alpha + 1} |\bP^\ell|_1
	\le C' 2^{-\ell(\alpha+1)}
\]
for almost all $\ell \in \N$,
as requested.
\end{proof}
While the cases of straightening and strong straightening may be
of some theoretical interest, straightening at a certain rate $\alpha$
is most important for applications.
The rest of this sections deals with the question how to
establish specific values for $\alpha$ and $\delta$ such that
all chains
$\bP \in \E^{\ge n}[\delta]$ are straightened by $\bG$ at rate $\alpha$.
First, we show that it is sufficient to
consider finite levels of subdivision. To this end,
we define the functions $\Gamma_\ell, \ell \in \N$, by
\[
  \Gamma_\ell[\delta] := 
	\sup_{0 < |\bd|_2 \le \delta} \frac{\kappa_\ell(\be+\bd)}{|\bd|_2}
	,\quad
	\delta > 0
	.
\]
By \eqref{eq:kappa=d}, we obtain
$\kappa_\ell(\bq) \le \Gamma_\ell[\delta] \kappa(\bq)$
for $\bq \in \Q^n[\delta]\backslash\{\be\}$.
However, $\kappa(\be) = \kappa_\ell(\be)= 0$
so that this estimate is valid for all $\bq \in \Q^n[\delta]$.
Moreover, by invariance of relative distortion, we obtain
\[
  \kappa_\ell(\bp) \le \Gamma_\ell[\delta] \kappa(\bp)
	,\quad
	\bp \in \E^n[\delta]
	.
\]
If $\Gamma_\ell[\delta]\le \varepsilon/\delta$,
we have $\bgL(\bq) \in \E^n[\varepsilon]$. Hence, for any $\Sigma\in \calL^s$, 
\[
  \kappa(\bgS(\bgL(\bq))) \le \Gamma_s[\varepsilon] \Gamma_\ell[\delta] \kappa(\bq)
	,
\]
and we conclude that
\(
\label{eq:Gamma}
 \Gamma_{s+\ell}[\delta] \le \Gamma_{s}[\varepsilon]\, \Gamma_\ell[\delta]
	.
\)
Typically, it is hard to determine $\Gamma_\ell[\delta]$ explicitly.
Therefore, we
show how straightening rates can be derived from
upper bounds.
\begin{lemma}
\label{lem:straight}
If $\Gamma_\ell[\delta]\le \Gamma < 1$,
then all chains $\bP \in \E^{\ge n}[\delta]$ are 
straightened by $\bG$ at rate 
$\alpha = -\ell^{-1}\log_2 \Gamma$.
\end{lemma}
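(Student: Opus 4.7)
The plan is to iterate the master inequality $\kappa_\ell(\bP') \le \Gamma_\ell[\delta]\,\kappa(\bP')$, which holds for any $\bP' \in \E^{\ge n}[\delta]$ (derived just above the lemma), along powers of $\Gamma$, and then to bridge the intermediate levels using the submultiplicative estimate~\eqref{eq:Gamma} together with monotonicity of $\delta\mapsto\Gamma_r[\delta]$ in its argument.

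\textbf{Step 1 (decay along multiples of $\ell$).} For a chain $\bP\in\E^{\ge n}[\delta]$, I would prove by induction on $k\in\N_0$ that $\kappa_{k\ell}(\bP)\le\Gamma^k\delta$. The base $k=0$ is the hypothesis. For the step, since $\Gamma<1$, the inductive bound places $\bP^{k\ell}$ in $\E^{\ge n}[\Gamma^k\delta]\subseteq\E^{\ge n}[\delta]$, so the master inequality applied to $\bP^{k\ell}$ gives $\kappa_{(k+1)\ell}(\bP)=\kappa_\ell(\bP^{k\ell})\le\Gamma\cdot\Gamma^k\delta=\Gamma^{k+1}\delta$. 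The identity $\Gamma=2^{-\alpha\ell}$ then rephrases this as $\kappa_{k\ell}(\bP)\le\delta\cdot 2^{-\alpha k\ell}$, which is the desired rate along the subsequence $(k\ell)_k$.

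\textbf{Step 2 (intermediate levels).} For arbitrary $j=k\ell+r$ with $0\le r<\ell$, applying \eqref{eq:Gamma} once with $s=r$ and the role of $\ell$ played by $k\ell$ (and using $\Gamma_{k\ell}[\delta]\le\Gamma^k\le 1$, so we may take $\varepsilon=\delta$) yields $\Gamma_{k\ell+r}[\delta]\le\Gamma_r[\delta]\cdot\Gamma^k$; hence $\kappa_j(\bP)\le\Gamma_r[\delta]\,\Gamma^k\delta$. Multiplying by $2^{\alpha j}$ and using $\Gamma^k\cdot 2^{\alpha k\ell}=1$ collapses the $k$-dependence:
\[
2^{\alpha j}\,\kappa_j(\bP)\;\le\;2^{\alpha r}\,\Gamma_r[\delta]\,\delta\;\le\;2^{\alpha\ell}\,\delta\cdot\max_{0\le r<\ell}\Gamma_r[\delta],
\]
which is exactly the essential boundedness of $(2^{\alpha j}\kappa_j(\bP))_j$ that defines straightening at rate $\alpha$.

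\textbf{Main obstacle.} The delicate point is finiteness of $\Gamma_r[\delta]$ for $0<r<\ell$: the hypothesis only controls $\Gamma_\ell[\delta]$, and a priori some intermediate iterate $\bgS(\bq)$, $|\Sigma|=r$, could acquire a constant linear part and force $\Gamma_r[\delta]=\infty$. I would resolve this via Lemma~\ref{lem:g(e)}, which supplies a threshold $\delta_*>0$ on which every $\bgS$ with $|\Sigma|\le\ell$ is $C^{1,\nu}$; the Lipschitz-type estimates \eqref{eq:D1}, \eqref{eq:D2} combined with the reverse-norm inequality \eqref{eq:Pi1} then show $\Gamma_r[\delta_*]<\infty$ for each such $r$ (the same bookkeeping as in the proof of Lemma~\ref{lem:halvening}). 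Choosing $k_0$ with $\Gamma^{k_0}\delta\le\delta_*$, I would simply restart Step~2 with base point $\bP^{k_0\ell}$ in place of $\bP$; the finitely many levels $j<k_0\ell$ are exactly what the word \emph{essentially} in Definition~\ref{def:straight} is designed to absorb.
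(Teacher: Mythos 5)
Your proposal is correct and follows essentially the same route as the paper's proof: iterate the bound along multiples of $\ell$ to get $\Gamma_{k\ell}[\delta]\le\Gamma^k$, handle intermediate levels $j=k\ell+r$ via the submultiplicativity \eqref{eq:Gamma}, and secure finiteness of $\Gamma_r$ for $0\le r<\ell$ by waiting (your $k_0$, the paper's $r_0$) until the iterates enter the small ball $\Q^n[\varepsilon]$ on which Lemma~\ref{lem:g(e)} yields $\Gamma_r[\varepsilon]\le 4c_r<\infty$. The finitely many excluded levels are absorbed by the word \emph{essentially}, exactly as in the paper.
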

\begin{proof}
Analogous to the preceding proof, we find using 
\eqref{eq:Pi}, \eqref{eq:kappa=d}, \eqref{eq:equinorm},
\begin{align*}
  \kappa(\bgS(\bq)) &\le 
  \frac{|\bgS(\be)|_2 + |\bgS(\bq)-\bgS(\be)|_2}
			 {|\Pi\bgS(\be)|_1 - 2|\Pi(\bgS(\bq)-\bgS(\be))|_0}\\
  &\le
  \frac{2 c_s\, |\bd|_2}{1 - 4|\Pi|_0\, c_s\, |\bd|_0}
  \le
  \frac{2 c_s\, \kappa(\bq)}{1 - 4 \sqrt{n} m^2 c_s\, \kappa(\bq)}
\end{align*}
for any $\Sigma \in \calL^s$ and $\bq \in \Q^n[\delta_s]$.
With $\varepsilon_s := \min(\delta_s, 1/(8\sqrt{n}m^2 c_s))$, we obtain
\[
  \kappa(\bgS(\bq)) \le 4 c_s \kappa(\bq)
  ,\quad
  \bq \in \Q^n[\varepsilon_s]
  ,
\]
and hence $\Gamma_s[\varepsilon_s] \le 4 c_s$.
Let $\varepsilon'_\ell := \min_{s<\ell} \varepsilon_s$ and
$c'_\ell := 4 \max_{s<\ell} c_s$.
By \eqref{eq:Gamma}, $\Gamma_{r\ell}[\delta] \le (\Gamma_\ell[\delta])^r$.
That is, we can choose $r_0 \in \N$ such that 
$\Gamma_{r\ell}[\delta] \le \varepsilon'_\ell/\delta$ for all $r \ge r_0$.
Given $j \ge \ell r_0$, there exists $s < \ell$ and $r \ge r_0$ such that
$j = r \ell + s$. Using \eqref{eq:Gamma} again, 
\[
  \Gamma_{j}[\delta] \le \Gamma_{s}[\varepsilon'_\ell]\, \Gamma_{r \ell}[\delta]
	\le c'_\ell\, (\Gamma_\ell[\delta])^r
	\le c'_\ell\, \Gamma^r
	.
\]
With $\Gamma = 2^{-\alpha \ell}$, we finally obtain
\(
\label{eq:kappa_j}
  2^{\alpha j} \kappa_j(\bP) \le
	2^{\alpha j} \Gamma_{j}[\delta] \kappa(\bP)
	\le
	c'_\ell\, 2^{\alpha j} \Gamma^r\kappa(\bP)
	\le
	c'_\ell\, 2^{\alpha(j-r\ell)} \kappa(\bP)
	\le c'_\ell\, 2^{\alpha \ell} \delta
\)
for $\bP \in \E^{\ge n}[\delta]$ and
almost all $j \in \N$.
The right hand side is bounded independent of $j$, as requested, which finished the proof.
\end{proof}
As the function $\Gamma_\ell$ is monotonically increasing, smaller values
of $\delta$ suggest higher H{\"o}lder exponents at the account of
a smaller range of applicability. The next theorem resolves this conflict.
It shows how to combine good local bounds with coarser estimates on a larger set
in a beneficial way:
\begin{theorem}
\label{thm:straight+}
Let 
\[
	\Gamma_k[\delta,\gamma] 
	:= 
	\max_{\delta \le |\bd|_2 \le \gamma} \frac{\kappa_k(\be+\bd)}{|\bd|_2}
	,\quad
	0 < \delta < \gamma
  ,\quad
  k \in \N
	.
\]
If $\Gamma_k[\delta,\gamma] < 1$ and
$\Gamma_\ell[\delta] \le \Gamma < 1$, then
all chains $\bP \in \E^{\ge n}[\gamma]$ are 
straightened by $\bG$ at rate 
$\alpha = -\ell^{-1}\log_2 \Gamma$.
\end{theorem}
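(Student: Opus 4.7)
My plan is to reduce Theorem~\ref{thm:straight+} to Lemma~\ref{lem:straight}. Given $\bP \in \E^{\ge n}[\gamma]$, I aim to show that some iterate $\bP^{j_0}$ lies in $\E^{\ge n}[\delta]$; once this is established, Lemma~\ref{lem:straight} applied to $\bP^{j_0}$ yields that $(2^{\alpha j}\kappa_j(\bP^{j_0}))_{j\in\N}$ is essentially bounded. Because straightening at a given rate is a tail property, this transfers to $\bP$ itself, which is what the definition requires.

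To drive the distortion into $\E^{\ge n}[\delta]$, I would iterate $\bG^k$ and monitor $\mu_j := \kappa(\bP^{jk})$. The similarity invariance in~(\ref{eq:rS=r}) extends the hypothesis $\Gamma_k[\delta,\gamma]<1$ from normalized chains to arbitrary ones: for every subchain $\bp$ with $\delta\le\kappa(\bp)\le\gamma$ and every $\Lambda\in\calL^k$,
\[
  \kappa(\bgL(\bp))\le G\,\kappa(\bp),\qquad G := \Gamma_k[\delta,\gamma]<1.
\]
For subchains with $\kappa(\bp)<\delta$, the $C^{1,\nu}$-smoothness of $\bgL$ near $\be$ from Lemma~\ref{lem:g(e)} gives a finite constant $\Gamma_k[\delta]$ with $\kappa(\bgL(\bp))\le\Gamma_k[\delta]\,\kappa(\bp)$. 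Taking the maximum over all subchains of $\bP^{(j-1)k}$ yields
\[
  \mu_j\le\max\bigl(G\mu_{j-1},\;\Gamma_k[\delta]\,\delta\bigr),
\]
from which I would extract geometric decay of $\mu_j$ by factor $G$ as long as the first argument dominates, so that after at most $\lceil\log(\gamma/\delta)/\log(1/G)\rceil$ such $k$-blocks, $\mu_j$ falls into a regime controlled by the second hypothesis.

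The principal obstacle is the potentially large constant $\Gamma_k[\delta]$: a subchain whose distortion is just below $\delta$ could transiently map to one with distortion exceeding $\delta$, preventing a straightforward monotone descent and, in the worst case, threatening to push the chain out of $\E^{\ge n}[\gamma]$ altogether. The second hypothesis $\Gamma_\ell[\delta]\le\Gamma<1$ is exactly what is needed to rule this out, since $\bG^\ell$ genuinely contracts every subchain of distortion at most $\delta$ by factor $\Gamma$. I expect to handle the transient behavior by an interleaving scheme, applying $\bG^k$-blocks to shrink high-distortion subchains and $\bG^\ell$-blocks to shrink low-distortion ones, together with an iterated use of~(\ref{eq:Gamma}) to combine the estimates across levels. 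Executing this bookkeeping carefully, so that both families of subchains contract in unison, is the technical heart of the argument; once $\mu_{j_0}\le\delta$ is reached, the rate $\alpha=-\ell^{-1}\log_2\Gamma$ follows immediately from Lemma~\ref{lem:straight}.
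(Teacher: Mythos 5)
Your overall two-phase strategy (use $\Gamma_k[\delta,\gamma]<1$ to drive the distortion from $[\delta,\gamma]$ down below $\delta$, then hand over to Lemma~\ref{lem:straight}) is the same as the paper's, but the step you yourself flag as ``the technical heart'' is exactly where the argument breaks, and the fix you sketch does not work in the form proposed. Your recursion $\mu_j\le\max(G\mu_{j-1},\Gamma_k[\delta]\,\delta)$ tracks the \emph{global} maximum of the distortion over all subchains at a common level, and it never closes: its stable value is $\Gamma_k[\delta]\,\delta$, which may exceed $\delta$ (so you never reach $\mu_{j_0}\le\delta$) and may even exceed $\gamma$, at which point the recursion itself becomes invalid, because subchains with distortion in $(\gamma,\Gamma_k[\delta]\,\delta]$ are controlled by neither hypothesis. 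The proposed interleaving of $\bG^k$- and $\bG^\ell$-blocks cannot repair this at the level of the whole chain: at any fixed level some subchains are in the high-distortion regime and others in the low-distortion regime, and you cannot apply blocks of different lengths to different subchains of one and the same iterate $\bP^j$ while keeping a single global level index. (A secondary issue: finiteness of $\Gamma_k[\delta]$ is not among the hypotheses; Lemma~\ref{lem:g(e)} gives smoothness only on $\Q^n[\delta_k]$, and $\delta$ need not be $\le\delta_k$.)

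The way out --- and this is what the paper does --- is to abandon the global maximum and argue per trajectory. Fix a subchain $\bp\in T^n\bP$ and an index vector $\Sigma\in\calL^s$, and split $\Sigma=[\Sigma_2,\Sigma_1]$ at the \emph{first} time the distortion of this particular trajectory drops to $\le\delta$. The hypothesis $\eta:=\Gamma_k[\delta,\gamma]<1$ guarantees that this crossing time is uniformly bounded by $s_1^*=k\log_\eta(\delta/\gamma)$, independently of $\bp$ and $\Sigma$, so the prefix contributes at most the fixed factor $2^{\alpha s_1^*}$. The tail is then handled not by re-invoking the statement of Lemma~\ref{lem:straight}, but by the explicit estimate \eqref{eq:kappa_j} from its proof, applied to the single chain $\bg_{\Sigma_1}(\bp)\in\E^n[\delta]$; crucially, \eqref{eq:kappa_j} keeps controlling $2^{\alpha j}\kappa_j$ even if the distortion later climbs back above $\delta$, which is precisely the transient overshoot your global recursion cannot absorb. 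Taking the maximum over $\bp$ and $\Sigma$ only at the very end yields the uniform bound $2^{\alpha s}\kappa_s(\bP)\le 2^{\alpha(s_1^*+\ell)}\delta$. Until your bookkeeping is replaced by this (or an equivalent) per-trajectory decomposition, the proof is incomplete.
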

\begin{proof}
Let $\bp \in T^n\bP$, $s\in \N$ be sufficiently large, and
$\Sigma \in \calL^s$ be some index vector.
Choose a partition $\Sigma = [\Sigma_2,\Sigma_1]$ such that
$\Sigma_1 \in \calL^{s_1}$ is the shortest index vector with 
$\kappa(\bg_{\Sigma_1}(\bp)) \le \delta$. Because relative distortion is reduced
at least by the factor $\eta:=\Gamma_k[\delta,\gamma]$ by always $k$ steps of subdivision, 
the length $s_1$ of $\Sigma_1$
cannot exceed the value $s^*_1~:=~k \log_{\eta}(\delta/\gamma)$.
Let $r_0$ be defined as in the proof of Lemma~\ref{lem:straight},
and assume that $s \ge s^*_1 + r_0\ell$. Then the length
$s_2 = s - s_1$ of $\Sigma_2$ is at least $r_0\ell$ and we can
use \eqref{eq:kappa_j} to estimate the relative distortion of
$\bp' := \bg_{\Sigma_1}(\bp) \in \E^n[\delta]$. We obtain
\[
  2^{\alpha s} \kappa(\bgS(\bp))
	= 2^{\alpha s_1} 2^{\alpha s_2} \kappa(\bg_{\Sigma_2}(\bp'))
	\le 
	2^{\alpha s^*_1} 2^{\alpha \ell} \delta 
	.
\]
Since $\bp$ and $\Sigma$ were chosen arbitrarily, it follows that 
$2^{\alpha s} \kappa_s(\bP)$ is bounded by some constant,
\[
  2^{\alpha s} \kappa_s(\bP) \le 2^{\alpha (s^*_1+\ell)} \delta
	,\quad
	s \ge s^*_1 + r_0\ell
	,
\]
and the proof is complete.
\end{proof}
Typically, the functions $g_0,g_1$ and all other functions appearing here
can be coded for numerical evaluation in terms of standard library functions.
Using interval arithmetics, 
the range of such functions over compact intervals can be
estimated reliably and efficiently by a computer.
In this respect, the above bound $\Gamma_k[\delta,\gamma]<1$ admits automated verification
for given values of $\delta$ and $\gamma$.
However, 
when trying to determine some bound $\Gamma > \Gamma_\ell[\delta]$,
the situation is more complicated.
The point is that the domain $\{\bd : 0 < |\bd|_2 \le \delta\}$,
which is used to define $\Gamma_\ell[\delta]$, is not compact.
From an application point of view, it may be sufficient to
evaluate the ratio $\kappa_\ell(\bq)/|\bd|_2$ at a sufficiently dense set of chains.
However, considering the vanishing denominator for $\bd=0$,
the determination of safe bounds requires more care.
The following lemma provides an upper bound on $\Gamma_\ell[\delta]$
in terms of the range of continuous functions over a compact domain.
Thus it becomes possible to establish a rigorous upper bound $\Gamma$
on $\Gamma_\ell[\delta]$ for given $\delta$ with the help of a computer.
The result will also prove to be useful 
for the further development of theory in Section~\ref{sec:asymptotic}.
Below,
\[
  |M|_2 := \max_{|\bd|_2=1} |M\cdot\bd|_2
\]
denotes the norm of the linear operator $M : \LL^n_{\rm c} \to \E^n$
with respect to the $|\cdot|_2$-norm.
\begin{lemma}
\label{lem:GammaBnd}
It is
\[ 
	\Gamma_\ell[\delta] \le
	\Gamma_\ell^*[\delta] := 
	\max_{\Lambda \in \calL^\ell}
  \frac{\max_{\bq \in \Q^n[\delta]} | D\bgL(\bq)|_2} 
       {\min_{\bq \in \Q^n[\delta]} |\Pi\bgL(\bq)|_1}
	.
\]
\end{lemma}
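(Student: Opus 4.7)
The plan is to bound $\kappa(\bgL(\be+\bd))/|\bd|_2$ for fixed $\Lambda\in\calL^\ell$ and $\bd \in \LL^n_{\rm c}$ with $0 < |\bd|_2 \le \delta$, then take the maximum over $\Lambda$ and the supremum over $\bd$.

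First, I would unfold the definition of $\kappa_\ell$. Because $\bq := \be + \bd$ has length exactly $n$, its only length-$n$ subchain is itself, so the subchain representation $T^n\bG^\ell(\bq) = \{\bgL(\bq) : |\Lambda|=\ell\}$ reduces $\kappa_\ell(\bq)$ to $\max_{\Lambda\in\calL^\ell}\kappa(\bgL(\bq))$. It therefore suffices to bound the numerator and denominator of $\kappa(\bgL(\bq)) = |\bgL(\bq)|_2/|\Pi\bgL(\bq)|_1$ separately for each $\Lambda$.

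For the numerator I would exploit that by \eqref{eq:scale_bgl} the chain $\bgL(\be) = 2^{-\ell}(\be+\tau_\Lambda e)$ is linear, so $\Delta^2 \bgL(\be) = 0$ and hence $|\bgL(\bq)|_2 = |\Delta^2(\bgL(\bq) - \bgL(\be))|_0$. Since $\Q^n[\delta] = \be + \{\bd'\in\LL^n_{\rm c} : |\bd'|_2 \le \delta\}$ is a $|\cdot|_2$-ball and therefore convex, the line segment $\{\be+t\bd:t\in[0,1]\}$ lies entirely in $\Q^n[\delta]$, and Lemma~\ref{lem:g(e)} (applied tacitly for $\delta\le\delta_\ell$) places this segment in the $C^1$-domain of $\bgL$. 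The fundamental theorem of calculus then gives
\[
  \bgL(\bq) - \bgL(\be) = \int_0^1 D\bgL(\be + t\bd)\cdot \bd\, dt,
\]
and applying $\Delta^2$ under the integral (by linearity) together with the operator norm $|D\bgL(\bq')|_2$ yields $|\bgL(\bq)|_2 \le |\bd|_2\cdot \max_{\bq'\in\Q^n[\delta]}|D\bgL(\bq')|_2$. The denominator is trivially bounded below by $\min_{\bq'\in\Q^n[\delta]}|\Pi\bgL(\bq')|_1$.

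Dividing these two estimates cancels $|\bd|_2$, and taking the maximum over $\Lambda$ then delivers $\kappa_\ell(\bq)/|\bd|_2 \le \Gamma_\ell^*[\delta]$; passing to the supremum over admissible $\bd$ completes the proof. The main (mild) obstacle is the convexity/smoothness-domain check needed to justify the FTC along the whole segment, which is immediate from the characterization of $\Q^n[\delta]$ in \eqref{eq:kappa=d} as a $|\cdot|_2$-ball in $\LL^n_{\rm c}$ centered at $\be$.
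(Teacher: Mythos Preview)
Your proposal is correct and follows essentially the same route as the paper's proof: both use the fundamental theorem of calculus along the segment $t\mapsto\be+t\bd$ to bound $|\bgL(\bq)|_2$ by $\max_{\bq'\in\Q^n[\delta]}|D\bgL(\bq')|_2\,|\bd|_2$, then divide by the trivial lower bound on $|\Pi\bgL(\bq)|_1$. Your explicit remarks on why $|\bgL(\be)|_2=0$ and why the segment stays in $\Q^n[\delta]$ are welcome clarifications that the paper leaves implicit.
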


\begin{proof}
Given $\bq= \be+\bd\in \Q^n[\delta]$, 
let $\varphi(\tau) := \bgL(\be+\tau\bd)-\bgL(\be), \tau \in [0,1]$.
Then $\varphi(1)-\varphi(0) = \int_0^1\varphi'(\tau)\, d\tau$ yields
\[
  |\bgL(\bq)|_2 = |\varphi(1)-\varphi(0)|_2
	\le
	\int_0^1 |D\bgL(\be+\tau\bd)|_2\, |\bd|_2\, d\tau
	\le
	\max_{\bq \in \Q^n[\delta]} | D\bgL(\bq)|_2 |\bd|_2
	.
\]
Now,
\[
  \frac{\kappa(\bgL(\bq))}{\kappa(\bq)} =
	\frac{|\bgL(\bq)|_2}{|\Pi\bgL(\bq)|_1 |\bd|_2}
	\le
	\frac{\max_{\bq \in \Q^n[\delta]} | D\bgL(\bq)|_2} 
       {\min_{\bq \in \Q^n[\delta]} |\Pi\bgL(\bq)|_1}
	,
\]
and the claim follows.
\end{proof}
Together, Theorem~\ref{thm:straight+} and Lemma~\ref{lem:GammaBnd}  
facilitate an automated assessment of straightening properties of 
a given GLUE-scheme. 
First, an upper bound $\Gamma$ 
on $\Gamma_\ell^*[\delta]$
is determined for a relatively small value of $\delta$. Then, a
preferably large value $\gamma$ is sought by checking the condition
$\Gamma_k[\delta,\gamma]<1$. 
In general, larger values for $\ell$ and $k$ yield better results
at the cost of more time-consuming computations.

The requested software consists of a universal control unit
operating on a specific implementation of the functions $g_0,g_1$. 
Even the requested derivatives need not be coded explicitly
when using a tool for automated differentiation, as it comes with 
many packages for interval arithmetics. We will describe such
a procedure in a forthcoming report.

\section{Smoothness of limit curves}
\label{sec:regular}
In this section, we relate straightening of chains
to smoothness properties of corresponding limit curves. 
We show that straightening and strong straightening imply 
continuity and differentiability of the natural parametrization of
the limit curve, respectively.
Straightening at rate $\alpha$ yields local H{\"o}lder continuity
of the first derivative with according exponent. 
Further, strong straightening yields a regular limit curve, 
i.e., the first derivative vanishes nowhere.
The limit curve corresponding to some initial chain is expressed
as the limit of a sequence of smooth parametrized curves, which are
defined as the linear combination of the points at increasing
levels of subdivision with uniform dyadic shifts of a given base function. 
The natural domain of the limit curve corresponding to an
initial chain with $\#\bP=N$ points is the interval $[0,N-n+1]$,
where $n$ is the spread of the GLUE-scheme in use.
However, for technical reasons, we consider smoothness properties
only on the open interval $I := (0,N-n+1)$. 
Uniform convergence of certain function sequences can 
only be observed on compact subintervals $I_z := [z,N-n+1-z]$, where 
$z>0$ is always understood as an arbitrary,
but fixed number. In this way, convergence results obtained on 
$I_z$ typically transfer to all of $I$.

The max-norm of a continuous curve $\Phi : I \to \E$ on the interval $I_z$
is defined by
\[
  |\Phi|_z := \max_{t \in I_z} \|\Phi(t)\|
	,
\]
where $\|\cdot\|$ is the Euclidean norm on $\E$, as above.
Throughout, limits
of sequences of curves are understood with respect to this norm.
Also the max-norm of real-valued functions on $I_z$ is denoted by $|\cdot|_z$.

Differentiation of a curve $\Phi: I \to \E$ with respect to its parameter
is expressed by means of the operator $\partial : \Phi \mapsto \Phi'$.
A curve is called
$C^k$ if the $k$th 
derivative $\partial^k\Phi$ exists and is a continuous function on $I$.
It is called
$C^{k,\alpha}$ if, moreover, 
\[
  \sup_{0 < h \le h_0} h^{-\alpha} \omega_z(\partial^k\Phi,h)
	< \infty
\]
for some constant $h_0>0$ and any $z>0$, where
\[
  \omega_z(\partial^k\Phi,h) := |\partial^k\Phi(\cdot+h/2)-\partial^k\Phi(\cdot-h/2)|_{z+h/2}
\]
is the modulus of continuity of $\partial^k\Phi$. 
Taking the norm on the interval $I_{z+h/2}$ guarantees that only values
on $I_z$ are taken into account.
The range of $h$ is bounded from above by $h_0$
because we intend to use the modulus of continuity as a local measure of smoothness,
disregarding global growth.
\begin{definition}
Let $k \in \N_0$ and $\alpha \in (0,1]$.
The GLUE-scheme $\bG$ is called 
\begin{itemize} 
\item 
{\em convergent at $\bP$}
if there exists a continuous {\em limit curve}
$\Phi[\bP] : \R \to \E$ such that
\(
\label{eq:limfun}
  \lim_{\ell \to \infty} \sup_{i\in I^\ell_z} 
	\bigl\|\Phi[\bP](2^{-\ell}i)-p^\ell_i\bigr\| = 0
\)
for any $z>0$,
where $p^\ell_i$ are the points of the $\ell$-th iterate $\bP^\ell$,
and $I^\ell_z := \N \cap 2^\ell I_z$ is the set of indices~$i$ satisfying 
$2^{-\ell}i \in I_z$;
\item
{\em $C^k$ or $C^{k,\alpha}$ at $\bP$} if
it is convergent at $\bP$, and $\Phi[\bP]$ is $C^k$ or $C^{k,\alpha}$, respectively;
\item
{\em almost $C^{k,\alpha}$ at $\bP$} if
it is $C^{k,\alpha'}$ at $\bP$ for any $\alpha'<\alpha$.
\end{itemize}
\end{definition}
The main theorem to be proven in this section is the following:
\begin{theorem}
\label{thm:C1alpha}
Let $\bG$ be a GLUE-scheme with spread $n$, and let $\bP \in \E^{\ge n}$. If
\begin{itemize}
  \item 
$\bP$ is straightened by $\bG$, then $\bG$ is convergent at $\bP$;
  \item 
$\bP$ is strongly straightened by $\bG$, then $\bG$ is $C^1$ at $\bP$;
  \item 
$\bP$ is straightened by $\bG$ at rate $\alpha$, then $\bG$ is $C^{1,\alpha}$ at $\bP$.
\end{itemize}
\end{theorem}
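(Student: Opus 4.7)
The strategy is to realize the limit curve $\Phi[\bP]$ as the uniform limit of a sequence of smooth parametric approximants built from the iterates, reducing the three bullets to the bounds on $|\bP^\ell|_1$ and $|\bP^\ell|_2$ provided by Lemma~\ref{lem:halvening}. Let $B : \R \to \R$ be the cardinal cubic B-spline (which is $C^2$, compactly supported, reproduces affine functions, and refines via $B(t) = \sum_k b_k B(2t-k)$), and set
\[
\Phi^\ell(t) := \sum_i p_i^\ell\, B\bigl(2^\ell t - i - c^\ell\bigr),
\]
with a parameter shift $c^\ell$ chosen, using \eqref{eq:scale_bgl}, so that $\Phi^\ell$ collapses to the same affine parametrization whenever $\bP^\ell$ is affine. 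Each $\Phi^\ell$ is $C^2$ on $\R$, and for $t \in I_z$ only finitely many summands contribute uniformly in $\ell$.

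The central analytic input is a triple of estimates, each derived from the $C^{1,\nu}$-smoothness of $g_0, g_1$ near $\bE_{m+1}$ combined with property~(E) and the reverse norm estimate \eqref{eq:equinorm}. Since both $p_j^{\ell+1} = g_\lambda(\cdot)$ and the value $(S_B\bP^\ell)_j$ produced by the B-spline refinement mask $S_B$ reduce to the same affine chain when their input is affine, their discrepancy is controlled by second differences, giving
\[
  |\Phi^{\ell+1} - \Phi^\ell|_z \le C\,|\bP^\ell|_2, \qquad |\partial\Phi^{\ell+1} - \partial\Phi^\ell|_z \le C\,2^\ell|\bP^\ell|_2, \qquad |\partial^2\Phi^\ell|_z \le C\,4^\ell|\bP^\ell|_2,
\]
the second by differentiating the refinement identity, and the third by using $B''(t) = B_1(t) - 2B_1(t-1) + B_1(t-2)$ together with summation by parts to convert $\sum p_i^\ell B''$ into a sum involving $\Delta^2 p_i^\ell$. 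A complementary inequality $|\bP^\ell|_2 \le c\,\kappa_\ell(\bP)\,|\bP^\ell|_1$, which follows from the definition of $\kappa$ together with \eqref{eq:Pi}, will also be used.

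For the first bullet, $|\bP^\ell|_2 \le 2|\bP^\ell|_1 \le Cq^\ell$ with $q$ chosen strictly between $1/2$ and $1$ makes $\sum_\ell|\Phi^{\ell+1}-\Phi^\ell|_z$ summable, so $\Phi^\ell$ converges uniformly on each $I_z$ to a continuous $\Phi[\bP]$; evaluating at $t = 2^{-\ell}i$ and invoking affine reproduction of $B$ yields \eqref{eq:limfun}. For the second bullet, the complementary inequality combined with $|\bP^\ell|_1 \le C 2^{-\ell}$ from strong straightening gives $2^\ell|\bP^\ell|_2 \le c'\kappa_\ell(\bP)$, which is essentially summable, so $\partial\Phi^\ell$ converges uniformly to a continuous $\partial\Phi[\bP]$. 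For the third bullet, $|\bP^\ell|_2 \le C 2^{-\ell(1+\alpha)}$ yields both $|\partial\Phi-\partial\Phi^\ell|_z \le \sum_{k\ge\ell} C\,2^k|\bP^k|_2 \le C' 2^{-\ell\alpha}$ and $|\partial^2\Phi^\ell|_z \le C\,2^{\ell(1-\alpha)}$; choosing $\ell$ with $2^{-\ell-1} < h \le 2^{-\ell}$ in the decomposition
\[
\omega_z(\partial\Phi, h) \le 2|\partial\Phi-\partial\Phi^\ell|_{z+h/2} + h|\partial^2\Phi^\ell|_{z+h/2}
\]
yields $\omega_z(\partial\Phi,h) \le C'' h^\alpha$.

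The technical heart of the argument is the middle estimate $|\partial\Phi^{\ell+1}-\partial\Phi^\ell|_z \le C\,2^\ell|\bP^\ell|_2$, which amounts to quantifying the first-difference-level discrepancy between $\bG$ and its B-spline reference scheme $S_B$ in terms of second differences of the input. This uses that $\bg_\lambda - S_B$ vanishes on linear chains (by~(E)) and that its derivative, restricted to the normal complement $\LL^n_{\rm c}$, is bounded near $\be$ via the $C^{1,\nu}$ assumption, transported to arbitrary linear chains by similarity invariance as in the proof of Lemma~\ref{lem:g(e)}. Once these three master estimates are in place the remaining bookkeeping is mechanical and the three bullets follow by plugging in the matching bounds from Lemma~\ref{lem:halvening}.
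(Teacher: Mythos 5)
Your proposal follows essentially the same route as the paper: build smooth approximants $\Phi^\ell$ from a compactly supported generator applied to the iterates, bound the level-to-level discrepancy by the deviation of the local data from linearity (i.e.\ by $|\bP^\ell|_2\approx\kappa_\ell(\bP)\,|\bP^\ell|_1$), telescope using the decay rates of Lemma~\ref{lem:halvening}, and finish the H\"older case with the split $\omega_z(\partial\Phi,h)\le 2\,|\partial\Phi-\partial\Phi^\ell|+h\,|\partial^2\Phi^\ell|$; this is exactly the mechanism of Lemma~\ref{lem:proxy} and the paper's proof. The one point that is not right as written is the treatment of the shift $\tau$. You claim that $p_j^{\ell+1}=g_\lambda(\cdot)$ and $(S_B\bP^\ell)_j$ ``reduce to the same affine chain when their input is affine,'' but for $\tau\neq 0$ they do not: by property~(E), $\bG$ maps an equispaced affine chain to one whose index offset carries the fractional part $\tau$, whereas the cubic B-spline mask is primal and produces a different offset. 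The resulting control-point discrepancy therefore contains a constant term of order $2^{-\ell}$ that is \emph{not} bounded by $|\bP^\ell|_2$, and fed into your middle estimate it would give $|\partial\Phi^{\ell+1}-\partial\Phi^\ell|_z=O(1)$, destroying the telescoping. Your own device, the parameter shift $c^\ell$, is the correct repair, but it must be exploited at the level of the parametrized curves rather than the control points: one shows that the local, similarity-covariant map $\bp\mapsto\Phi^{\ell+1}[\bG\bp]-\Phi^\ell[\bp]$ vanishes on \emph{all} affine chains (because $c^{\ell+1}-2c^\ell$ absorbs the offset $m+\tau$, so both curves reproduce the same affine parametrization) and is $C^{1,\nu}$ near them, whence the bound $C\,|\bP^\ell|_2$ follows by the argument of Lemma~\ref{lem:g(e)}. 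The paper avoids the issue altogether by taking as reference a linear GLUE-scheme $\bA^\tau$ with the \emph{same} shift as $\bG$, so that the two schemes agree point-for-point on linear chains and the control-point comparison you attempt becomes legitimate; with either fix in place, the rest of your argument is sound.
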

Before we can turn to the proof, 
we have to further prepare the ground.
The real-valued function $\varphi : \R \to \R$ is called a {\em generator},
if it is continuous, has compact support, and forms a partition of unity 
according to
\[
	\sum_{j \in \Z}\ \varphi(\cdot-j) = 1
  .
\]
Given such a generator and a chain $\bP \in \E^{\ge n}$,
we define the corresponding curve
\[
  \Phi^\ell[\bP,\varphi] := \sum_{j=0}^{\#\bP-1} p_j \varphi(2^\ell \cdot - j)
\]
at level $\ell \in \N_0$.
The points $p_j \in \E$ are also called the {\em control points}
of the curve. Typically, we will consider curves at level $\ell$ corresponding
to chains at the same level, i.e., expressions of the form 
$\Phi^\ell[\bP^\ell,\varphi]$. 
Choose $r$ such that $\operatorname{supp} \varphi \subseteq [-r,r]$
and let $\ell_z(\varphi) \in \N$ be an upper bound on $\log_2(r/z)$.
Then, for levels $\ell \ge \ell_z(\varphi)$, 
the sum $\Phi^\ell[\bP^\ell,\varphi] = \sum_{j=0}^{N^\ell-1} p_j^\ell \varphi(2^\ell \cdot-j)$
is complete in the following sense: 
for all indices $j \in \Z$ with $j < 0$ or $j \ge N^\ell$,
the functions $\varphi(2^\ell \cdot - j)$ vanish identically on $I_z$.
Equally, it holds
\(
\label{eq:sum1}
  \sum_{j=0}^{N^\ell-1} \varphi(2^\ell x-j) = 1
  ,\quad
  x \in I_z
  ,\
  \ell \ge \ell_z(\varphi)
  .
\)
Further,
with $\bar\varphi := \max_t \sum_j |\varphi(t - j)|$ the
{\em Lebesgue constant} of $\varphi$, we obtain the estimate
\(
\label{eq:Lebesgue}
  \bigl|\Phi^\ell[\bP,\varphi]\bigr|_z \le \bar\varphi |\bP|_0
	.
\)
The following lemma shows that it is possible to define $\Phi[\bP]$
as the limit of a sequence $\Phi^\ell[\bP^\ell,\varphi]$ of curves,
which are as smooth as the chosen $\varphi$.
Compared with the usual approximation by piecewise linear functions,
this approach simplifies the forthcoming arguments significantly.
\begin{lemma}
\label{lem:C0}
Let $\varphi$ be a generator.
If the function sequence $(\Phi^\ell[\bP^\ell,\varphi])_{\ell \in \N}$
is convergent and if $|\bP^\ell|_1$ is a null sequence, then 
$\Phi[\bP]:= \lim_{\ell \to \infty}\Phi^\ell[\bP^\ell,\varphi]$ is the limit
curve corresponding to $\bP$. 
In particular, $\bG$ is convergent at $\bP$. 
\end{lemma}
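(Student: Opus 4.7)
The plan is to verify the two requirements in the definition of ``convergent at $\bP$'': that $\Phi[\bP]$ is a continuous curve on $\R$, and that the dyadic evaluation identity \eqref{eq:limfun} holds. Continuity comes essentially for free from the hypotheses, so the substantive step is to compare $\Phi^\ell[\bP^\ell,\varphi](2^{-\ell}i)$ with $p_i^\ell$ and show the difference tends to zero uniformly.

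First I would fix $z>0$ and an index $i$ with $2^{-\ell}i \in I_z$, taking $\ell \ge \ell_z(\varphi)$ so that the partition-of-unity identity \eqref{eq:sum1} is in force at $x = 2^{-\ell}i$. Multiplying \eqref{eq:sum1} by $p_i^\ell$ and subtracting the definition of $\Phi^\ell[\bP^\ell,\varphi](2^{-\ell}i)$ gives
\[
  p_i^\ell - \Phi^\ell[\bP^\ell,\varphi](2^{-\ell}i)
  = \sum_{j=0}^{N^\ell-1} (p_i^\ell - p_j^\ell)\, \varphi(i-j).
\]
Choose $r$ with $\operatorname{supp}\varphi \subseteq [-r,r]$ so that only indices $j$ with $|i-j|\le r$ contribute. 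For such $j$, telescoping gives $\|p_i^\ell - p_j^\ell\| \le r\,|\bP^\ell|_1$, and the Lebesgue-constant bound yields $\sum_j |\varphi(i-j)| \le \bar\varphi$. Hence
\[
  \bigl\|p_i^\ell - \Phi^\ell[\bP^\ell,\varphi](2^{-\ell}i)\bigr\|
  \le r\bar\varphi\,|\bP^\ell|_1,
\]
which is a null sequence by assumption, uniformly in $i \in I^\ell_z$.

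Second, by the triangle inequality,
\[
  \bigl\|\Phi[\bP](2^{-\ell}i)-p_i^\ell\bigr\|
  \le
  \bigl|\Phi[\bP]-\Phi^\ell[\bP^\ell,\varphi]\bigr|_z
  + \bigl\|\Phi^\ell[\bP^\ell,\varphi](2^{-\ell}i)-p_i^\ell\bigr\|.
\]
The first term tends to zero by the assumed uniform convergence of the curve sequence on $I_z$, and the second by the estimate just derived; both estimates are uniform in $i \in I^\ell_z$, establishing \eqref{eq:limfun}. Finally, $\Phi[\bP]$ is continuous on $I$ because it is the uniform limit on every $I_z$ of the continuous curves $\Phi^\ell[\bP^\ell,\varphi]$, and every point of $I$ belongs to some $I_z$; extension of $\Phi[\bP]$ to a continuous curve on all of $\R$ (as required by the definition) can then be done by continuously extending beyond the endpoints of $I$, e.g.\ by constants, since this extension plays no role in \eqref{eq:limfun}.

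I do not expect any step to be the main obstacle; the only point requiring care is keeping the partition-of-unity sum ``complete'' on $I_z$, which is exactly what the choice $\ell \ge \ell_z(\varphi)$ guarantees, and the fact that $\|p_i^\ell-p_j^\ell\|$ can be controlled by $|\bP^\ell|_1$ only when $|i-j|$ stays bounded, which is ensured by the compact support of $\varphi$.
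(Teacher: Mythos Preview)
Your proof is correct and follows essentially the same approach as the paper: both use the triangle inequality to split off the term $\bigl|\Phi[\bP]-\Phi^\ell[\bP^\ell,\varphi]\bigr|_z$, and both handle the remaining term by inserting the partition-of-unity identity \eqref{eq:sum1} to rewrite $\Phi^\ell[\bP^\ell,\varphi](2^{-\ell}i)-p_i^\ell$ as $\sum_j (p_j^\ell-p_i^\ell)\varphi(i-j)$, then bounding it by $r\bar\varphi\,|\bP^\ell|_1$ via the compact support of $\varphi$ and the Lebesgue constant. Your added remarks on continuity of $\Phi[\bP]$ and its extension to $\R$ are not in the paper's proof but are harmless clarifications.
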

\begin{proof}
Denote the two summands on the right hand side of the estimate
\[
  \bigl\|\Phi[\bP](2^{-\ell}i)-p^\ell_i\bigr\|
  \le
  \bigl| \Phi[\bP] - \Phi^\ell[\bP^\ell,\varphi] \bigr|_z +
  \bigl\| \Phi^\ell[\bP^\ell,\varphi](2^{-\ell}i)-p^\ell_i \bigr\|
  .
\]
by $s^\ell$ and $s^\ell_i$ and assume $\ell \ge \ell_z(\varphi)$. 
Using \eqref{eq:sum1} and \eqref{eq:Lebesgue},
we see that the second one is bounded by
\[
  s^\ell_i
  =
  \Bigl\| \sum_{j=0}^{N^\ell-1} (p^\ell_j-p^\ell_i) \varphi(i-j) \Bigr\|
  \le
  \bar\varphi \sup_{|i-j|\le r} \|p^\ell_j-p^\ell_i\|
  \le 
  \bar\varphi r\, |\bP^\ell|_1
  .
\]
Since, by assumption, $s^\ell$ and $|\bP^\ell|_1$ are null sequences,
we obtain
\[
  0 \le 
  \lim_{\ell \to \infty} \sup_{i \in I^\ell_z} \bigl\| \Phi[\bP](2^{-\ell}i)-p^\ell_i \bigr\|
  \le
  \lim_{\ell \to \infty} 
  (s^\ell + \bar\varphi r \, |\bP^\ell|_1) = 0
  ,
\]
showing that $\Phi[\bP]$ satisfies \eqref{eq:limfun}.
\end{proof}
A GLUE-scheme $\bA$ is called {\em linear} if the functions $g_0,g_1$
in Definition~\ref{def:GLUE}, now renamed as $a_0,a_1$, have the form
\[
  a_\lambda(p_i,\dots,p_{i+m}) = \sum_{j=0}^m a_{\lambda,j} p_{i+j}
  ,\quad
  \lambda\in \{0,1\}
  ,
\]
for certain real weights $a_{\lambda,j}$ summing up to one. 
The associated self-maps $\bg_0,\bg_1$, now renamed as $\ba_0,\ba_1$, 
are given by a pair of $(n \times n)$-matrices $A_0,A_1$,
\[
  \ba_\lambda(T_i^n\bP) = A_\lambda T_i^n \bP
  ,\quad
  \lambda\in \{0,1\}
  .
\]
The analysis of linear
schemes is well-known \cite{StandardDyn02,Sabin:2010}, and we recall only a few facts which 
are needed in the following:
Assuming $\bA$ as convergent, we define
the {\em basic function $\psi := \Phi[\chi]$ of $\bA$} as subdivision limit
of the real-valued delta-sequence $\chi := (\delta_{0,i})_{i \in \Z}$.
This function is known to be a generator.
The {\em refinement equation} reads
\(
\label{eq:refine}
  \Phi^{\ell+r}[\bA^r\bP,\psi] = \Phi^\ell[\bP,\psi]
  ,\quad
  \ell,r \in \N
  .
\)
Products of the matrices $A_0,A_1$ are denoted by 
$A_\Lambda := A_{\lambda_\ell} \cdots A_{\lambda_1}$.
With $|A_\Lambda|_0$ the max-norm of the matrix $A_\Lambda$, we define
\(
\label{eq:rho}
  \varrho_\ell(\bA) := \max_{\Lambda \in \calL^\ell} |A_\Lambda|_0^{1/\ell}
	,\quad
	\varrho(\bA) := \limsup_{\ell \to \infty} \varrho_\ell(\bA)
	.
\)
That is, $\varrho(\bA)$ is the {\em joint spectral radius} of the matrices $A_0,A_1$.

We say that $\bA$ is {\em almost $C^{k,\alpha}$} if it is
almost $C^{k,\alpha}$ at all $\bP\in \E^{\ge n}$ and non-degenerate in the sense
that a constant limit can only be attained for constant initial data. In this case,
\begin{itemize} 
\item 
for $j = 1,\dots,k+1$, 
there exists a {\em difference scheme}
$\bA_j$ of order $j$
satisfying
\[
  \Delta^j \bA^\ell \bP = \bA_j^\ell \Delta^j \bP
  ,\quad
  \ell \in \N
  ;
\]
\item 
there exists a constant $c$ such that
\(
\label{eq:A_j}
  |\bA_j^\ell \bP|_0 \le c 2^{-\ell j}\, |\bP|_0
  ,\quad	
  \ell \in \N
  ,\
  j \le k
  ;
\)
\item
the difference scheme $\bA_{k+1}$ satisfies 
\(
\label{eq:jsr}
	\varrho(\bA_{k+1}) \le 2^{-k-\alpha}
	;
\)
\item
for $\ell = 1,\dots,k$,
the basic function $\psi_\ell$ of the 
{\em divided difference scheme} $\bbA_\ell := 2^\ell\bA_\ell$ 
of order $\ell$ are generators. In particular,
\(
\label{eq:partition}
  \sum_{j \in \Z} \psi_\ell(\cdot - j) = 1
	.
\)
\end{itemize}
If $\bA$ is $C^k$, then the $j$th derivative
of $\Phi^\ell[\bP^\ell,\psi]$ is given by 
\(
\label{eq:diff}
  \partial^j \Phi^\ell[\bP,\psi] =
  \Phi^\ell[2^{\ell j}\, \Delta^j \bP, \psi_j]
  ,\quad
  \ell \in \N
  ,\
  j  \le k
  ,
\)
where $\psi_j$ is the basic function of $\bar\bA_j$.

As an example, and for later use, we consider a family of linear
GLUE-schemes $\bA^\tau, \tau \in [0,1)$, with spread $n=5$, given by
\begin{align*}
  a_0^\tau(p_i,p_{i+1},p_{i+2}) &= \bigl((4-3\tau)p_{i} + (4+2\tau)p_{i+1} + \tau p_{i+2}\bigr)/8 \\
  a_1^\tau(p_i,p_{i+1},p_{i+2}) &= \bigl((1-\tau)     p_{i} + (6-2\tau)p_{i+1} + (1+3\tau)p_{i+2}\bigr)/8
  .
\end{align*}
It is easily verified by inspection that the shift of $\bA^\tau$ is $\tau$.
For $\tau=0$ and $\tau=1/2$, we recover
cubic and quartic B-spline subdivision, which
are known to be $C^{2,1}$ and $C^{3,1}$, respectively,
Otherwise, for $\tau \in (0,1)$, we consider 
the derived scheme $\bA^\tau_4$ for fourth differences, given 
by the $(1 \times 1)$-matrices
\[
  A^\tau_{4,0} = \frac{\tau}{8}
  ,\quad
  A^\tau_{4,1} = \frac{1-\tau}{8}
  .
\]
The joint spectral radius $\varrho(A^\tau_{4,0},A^\tau_{4,1}) = \max(\tau,1-\tau)/8$
is less than $1/8$, showing that the scheme
is almost $C^{3,\alpha}$ with $\alpha:= -\log_2 \max(\tau,1-\tau)$.

General GLUE-schemes can be analyzed with the aid of
linear schemes satisfying a proximity condition of the form
\[
  \bG(\bP) = \bA \bP + \bR(\bP)
\]
with some suitably bounded remainder $\bR$. The following lemma
is crucial in that respect.
\begin{lemma}
\label{lem:proxy}
Let $\bG$ be a GLUE-scheme, $\bA$ a convergent linear scheme with basic limit function $\psi$,
and $\bR := \bG-\bA$
the corresponding remainder. If $\bA$ is $C^k$, then 
\[
  \bigl|\partial^j(\Phi^{\ell+r}[\bP^{\ell+r},\psi] -
	\Phi^{\ell}[\bP^{\ell},\psi])\bigr|_z \le
  c\, \sum_{i=\ell}^\infty 2^{ij}|\bR(\bP^i)|_0
  ,\quad
  \ell,r \in \N
  ,
  j \le k
	,
\]
for some constant $c$.
\end{lemma}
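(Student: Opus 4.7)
The plan is to combine the refinement equation for the linear scheme $\bA$ with a telescoping argument and then pass to derivatives via \eqref{eq:diff}.

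First, I would write $\bP^{i+1} = \bG(\bP^i) = \bA \bP^i + \bR(\bP^i)$ and, using linearity of $\Phi^{i+1}[\,\cdot\,,\psi]$ in the chain slot together with the refinement equation $\Phi^{i+1}[\bA\bP^i,\psi] = \Phi^{i}[\bP^i,\psi]$ from \eqref{eq:refine}, derive the one-step identity
\[
  \Phi^{i+1}[\bP^{i+1},\psi] - \Phi^{i}[\bP^{i},\psi] = \Phi^{i+1}[\bR(\bP^i),\psi].
\]
Telescoping from $i=\ell$ to $i=\ell+r-1$ then gives
\[
  \Phi^{\ell+r}[\bP^{\ell+r},\psi] - \Phi^{\ell}[\bP^{\ell},\psi]
  = \sum_{i=\ell}^{\ell+r-1} \Phi^{i+1}[\bR(\bP^i),\psi].
\]

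Next, since $\bA$ is assumed $C^k$, I can apply $\partial^j$ for $j \le k$ termwise using \eqref{eq:diff}, obtaining
\[
  \partial^j \Phi^{i+1}[\bR(\bP^i),\psi]
  = \Phi^{i+1}[\,2^{(i+1)j}\,\Delta^j \bR(\bP^i),\,\psi_j\,].
\]
I would then estimate each summand on $I_z$ by the Lebesgue bound \eqref{eq:Lebesgue} applied to the generator $\psi_j$ (which exists by the hypothesis that $\bA$ is $C^k$ and the remarks around \eqref{eq:partition}), yielding
\[
  \bigl|\Phi^{i+1}[2^{(i+1)j}\Delta^j \bR(\bP^i),\psi_j]\bigr|_z
  \le \bar\psi_j\, 2^{(i+1)j}\, |\bR(\bP^i)|_j.
\]
The standard inequality \eqref{eq:standard} gives $|\bR(\bP^i)|_j \le 2^j |\bR(\bP^i)|_0$, so each term is bounded by $c\, 2^{ij} |\bR(\bP^i)|_0$ with $c := 2^{2j}\bar\psi_j$.

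Summing and extending the finite sum to an infinite one yields the stated estimate. The only mild subtlety I anticipate is bookkeeping the level at which the Lebesgue bound becomes applicable on $I_z$ (i.e., ensuring $i+1 \ge \ell_z(\psi_j)$), which is absorbed into the generic constant by enlarging $c$, and ensuring that the truncation of the sum on the left at $\ell+r-1$ versus $\infty$ on the right is justified by the nonnegativity of the summands. No delicate estimate is required; the main content is the telescoping identity that reduces everything to summing remainders.
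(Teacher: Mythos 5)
Your proof is correct and follows essentially the same route as the paper: your telescoped identity is exactly the paper's formula \eqref{eq:G-A} after applying $\Phi^{\ell+r}[\,\cdot\,,\psi]$ and the refinement equation, and the subsequent estimates via \eqref{eq:diff}, the Lebesgue bound \eqref{eq:Lebesgue}, and \eqref{eq:standard} match the paper's. The only cosmetic difference is that by keeping each remainder term at its own level $i+1$ you bypass the explicit appeal to the difference-scheme decay \eqref{eq:A_j}, which the paper uses to absorb the extra factors $\bA_j^{r+\ell-i-1}$; the two bookkeepings are equivalent.
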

\begin{proof}
The formula
\(
\label{eq:G-A}
  \bG^{r}(\bP^\ell) - \bA^{r}\bP^\ell = 
  \sum_{i=\ell}^{\ell+r-1} \bA^{r+\ell-i-1} \bR(\bP^{i})
\)
is easily verified by induction on $r$.
By \eqref{eq:refine}, \eqref{eq:diff}, \eqref{eq:A_j}, and \eqref{eq:standard},
\begin{align*}
  \bigl|\partial^j&(\Phi^{\ell+r}[\bP^{\ell+r},\psi] -
	\Phi^{\ell}[\bP^{\ell},\psi])\bigr|_z 
	=
  2^{(\ell+r)j}\bigl|\Phi^{\ell+r}[\Delta^j(\bG^r(\bP^\ell)-A^r\bP^{\ell}),\psi_j]\bigr|_z\\
	&\le 
	\bar\psi_j  2^{(\ell+r)j}  \bigl|\Delta^j(\bG^r(\bP^\ell)-A^r\bP^{\ell})\bigr|_0
	\le
	\bar\psi_j  2^{(\ell+r)j} \sum_{i=\ell}^{\ell+r-1}\bigl|\bA^{r+\ell-i-1}_j \Delta^j\bR(\bP^{i})\bigr|_0 \\
	&\le
	c'\, \sum_{i=\ell}^{\ell+r-1} 2^{(i+1)j} |\bR(\bP^{i})|_j
	\le
	c \, \sum_{i=\ell}^{\infty} 2^{ij} |\bR(\bP^{i})|_0
	.
\end{align*}
\end{proof}
Now, we are ready to prove Theorem~\ref{thm:C1alpha}:
\medskip

\begin{proof}
Let $\bA$ be a linear $C^1$-scheme with the same shift $\tau$ as $\bG$. 
For instance, we may choose
$\bA = \bA^\tau$, as introduced above. By \eqref{eq:D1}, \eqref{eq:equinorm} and \eqref{eq:Pi1},
\begin{align*}
  |\bgl(\bq) &- A_\lambda \bq |_0 
	\le
  |\bgl(\bq) - \bgl(\be)|_0 + |A_\lambda \bq - A_\lambda \be|_0\\
	&\le
	(c_1 + m^2 |A_\lambda|_0) |\bd|_2
	=
	c \kappa(\bq) |\Pi \bq|_1
	\le
	2 c\kappa(\bq) |\bq|_1
\end{align*}
for $\bq \in \Q^n[\delta_0]$ with $\delta_0$ as in the proof of Lemma~\ref{lem:halvening}.
Invariance under similarities yields
\[
	|\bgl(\bp) - A_\lambda \bp |_0 \le c' \kappa(\bp)|\bp|_1
	,\quad
	\bp \in \E^n[\delta_0]
	.
\]
If $\bP$ is straightened, 
then $\bP^i \in \E^{\ge n}[\delta_0]$ for almost all $i$.
Hence, using Lemma~\ref{lem:halvening},
the remainder $\bR := \bG-\bA$ 
is bounded by 
\[
	|\bR(\bP^i)|_0 \le c'\kappa(\bP^i) |\bP^i|_1
	\le C \kappa(\bP^i) q^i
\]
for almost all $i$.
Here, we may choose $q  = 2/3$ 
if $\bP$ is straightened, and
$q= 1/2$ if $\bP$ is strongly straightened by $\bG$.
Denoting the basic function of $\bA$ by $\psi$,
Lemma~\ref{lem:proxy} yields
\(
\label{eq:DPhi}
  \bigl|\partial^j(\Phi^{\ell+r}[\bP^{\ell+r},\psi] -
        \Phi^{\ell}[\bP^{\ell},\psi])\bigr|_z 
	\le  C 
  \sum_{i=\ell}^\infty 2^{ij} q^i \kappa(\bP^i)
	,\quad
	j \in \{0,1\}
  ,
\)
for almost all $\ell,r \in \N$.

First,
if $\bP$ is straightened by $\bG$, we consider the
case $j=0$. The sum tends to $0$ as $\ell \to \infty$ 
because $q =2/3$ and $\kappa(\bP^i)$ is essentially bounded.
Hence, $\Phi^{\ell}[\bP^{\ell},\psi]$
is a Cauchy sequence on $I_z$. 
Moreover, by Lemma~\ref{lem:halvening}, $|\bP^\ell|_1$
is a null sequence. Hence, 
by Lemma~\ref{lem:C0},
$\Phi[\bP] = \lim_{\ell \to \infty}\Phi^{\ell}[\bP^{\ell},\psi]$ 
is the limit of subdivision. In particular, $\bG$ is convergent at $\bP$.

Second, 
if $\bP$ is strongly straightened by $\bG$, we consider the
case $j=1$. Now, $q=1/2$, and the sum tends to $0$ as $\ell \to \infty$ 
because $\kappa(\bP^i)$ is essentially summable.
Hence, $\partial\Phi^{\ell}[\bP^{\ell},\psi]$
is a Cauchy sequence, too, showing that the limit curve $\Phi[\bP]$ is $C^1$.

Third, if $\bP$ is straightened by $\bG$ at rate $\alpha$, there
is a constant $C$ such that $\kappa(\bP^i) \le C 2^{-\alpha i}$
for almost all $i$.
Given $h\in(0,h_0]$, choose $\ell\in \N$ such that
$2^{-\ell} < h \le 2^{-\ell+1}$. If $h_0$ is sufficiently small, 
$\ell$ is sufficiently large to guarantee validity of the
estimates above.
The modulus of continuity of the derivative of the limit curve is bounded by
\[
  \omega_z(\partial\Phi[\bP],h) \le 
  \omega_z(\partial(\Phi[\bP]-\Phi^\ell[\bP^\ell,\psi]),h)
  +
  \omega_z(\partial\Phi^\ell[\bP^\ell,\psi],h)
  .
\]
By \eqref{eq:DPhi},
the first summand is bounded by
\begin{align*}
  \omega_z(\partial(\Phi[\bP]-&\Phi^\ell[\bP^\ell,\psi]),h)
  \le
  2\, \bigl| \partial(\Phi[\bP]-\Phi^\ell[\bP^\ell,\psi]) \bigr|_z 
	\le
	2c\, \sum_{i=\ell}^\infty \kappa(\bP^i) \\
	& \le
	2cC
  \sum_{i=\ell}^\infty 2^{-i\alpha} 
	=
  C' 2^{-\ell\alpha} 
	\le 
	C' h^\alpha
  .
\end{align*}
By the mean value theorem and equations \eqref{eq:diff}, \eqref{eq:Lebesgue},  
the second summand is bounded by
\[
  \omega_z(\partial\Phi^\ell[\bP^\ell,\psi],h)
  \le
  h \bigl|\partial^2 \Phi^\ell[\bP^\ell,\psi]\bigr|_z
  \le
  h \bigl|\Phi^\ell[ 2^{2\ell}\Delta^2 \bP^\ell,\psi_2]\bigr|_z 
  \le
  |\bP^\ell|_2 \, h 2^{2\ell} \bar \psi_2
	.
\]
Estimating $|\bP^\ell|_2$ by means of Lemma~\ref{lem:halvening}
and using $2^\ell \le 2/h$,
we obtain
\[
  \omega_z(\partial\Phi^\ell[\bP^\ell,\psi],h)
  \le
	c\, h2^{\ell(1-\alpha)}
	\le
	2 c h^\alpha
	.
\]
Together, $h^{-\alpha}\omega_z(\partial\Phi[\bP],h) \le C $ for some constant $C$, 
showing that $\bG$ is $C^{1,\alpha}$ at $\bP$.
\end{proof}
Strong straightening does not only imply differentiability of the limit
curve, but also its regularity in the sense of differential geometry.
\begin{theorem}
\label{thm:regular}
If the chain $\bP \in \E^{\ge n}$ 
is strongly straightened by the GLUE-scheme $\bG$, then
\[
  \partial \Phi[\bP](t) \neq 0
	,\quad
	t \in I
	.
\]
\end{theorem}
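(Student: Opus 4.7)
The plan is to build on the $C^1$-convergence established in Theorem~\ref{thm:C1alpha} and show directly that $\|\partial\Phi^\ell[\bP^\ell,\psi](t)\|$ stays bounded below by a positive constant for all sufficiently large $\ell$. Here $\bA$ denotes a linear $C^1$-companion of $\bG$, as used in the proof of Theorem~\ref{thm:C1alpha}, with basic function $\psi$ and first-difference basic function $\psi_1$. Since
\[
  \partial\Phi^\ell[\bP^\ell,\psi](t) = 2^\ell\sum_j \Delta p^\ell_j\, \psi_1(2^\ell t - j)
\]
is a weighted sum of the first differences $2^\ell \Delta p^\ell_j$ with weights summing to $1$ but in general signed, I would need (i) a lower bound of order $2^{-\ell}$ on $\|\Delta p^\ell_j\|$ near $2^\ell t$, and (ii) a near-alignment of the directions of those first differences to rule out cancellation in the signed sum.

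For (i), I would mirror the upper-bound calculation behind Lemma~\ref{lem:halvening} with reversed triangle inequalities to obtain, for $\bq = \be+\bd \in \Q^n[\delta_0]$ with $\delta_0$ small enough,
\[
  \frac{|\bgl(\bq)|_1}{|\bq|_1}
  \ge \frac{|\bgl(\be)|_1 - |\bgl(\bq) - \bgl(\be)|_1}{|\be|_1 + |\bd|_1}
  \ge \frac{1/2 - c_1 |\bd|_2}{1 + |\bd|_1}
  \ge \frac{1}{2}\bigl(1 - c\kappa(\bq)\bigr),
\]
which extends to $\bp \in \E^n[\delta_0]$ by similarity invariance. Iterating along the subdivision paths $\bp = \bgL(\bp^{\ell_0})$ and using essential summability to ensure that $K := \prod_{j\ge\ell_0}(1 - c\kappa_j(\bP))$ is strictly positive, I get $|\bp|_1 \ge K\, 2^{-(\ell-\ell_0)}\,|\bp^{\ell_0}|_1$. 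A subsidiary observation is that $\mu := \min\{|\bp^{\ell_0}|_1 : \bp^{\ell_0} \in T^n\bP^{\ell_0}\} > 0$: any constant subchain at level $\ell_0$ would, by reproduction of constants under $\bG$, persist at every subsequent level and force $\kappa_\ell(\bP) = \infty$ there, contradicting essential summability. Hence $|\bp|_1 \ge c\, 2^{-\ell}$ holds uniformly over all subchains at all levels $\ell \ge \ell_0$.

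For (ii), I would pass to the normalization $S_\bp(\bp) = \be + \bd$: the first differences of $\be+\bd$ are $\Delta q_j = e + \Delta d_j$ with $\|\Delta d_j\| \le |\bd|_1 \le m\kappa(\bp)$, so they all lie within $m\kappa(\bp)$ of the fixed direction $e$. Since $S_\bp$ acts on differences as a scaled orthogonal transformation of norm $\varrho_\bp$, pulling back produces a reference vector $v_\bp$ with $\|v_\bp\| = 1/\varrho_\bp \asymp |\bp|_1 \asymp 2^{-\ell}$ such that $\|\Delta p_j - v_\bp\| \le m\kappa(\bp)\|v_\bp\|$ for every $j$ in the subchain $\bp$. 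Since overlapping subchains share first differences, their reference vectors are mutually close, so a common $v^\ell$ of norm $\asymp 2^{-\ell}$ works for all differences in any short window of indices.

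Combining (i) and (ii): for $\ell$ large, the finitely many indices in the support of $\psi_1(2^\ell t - \cdot)$ lie in such a window, and using $\sum_j \psi_1(2^\ell t - j) = 1$ I would conclude
\[
  \partial\Phi^\ell[\bP^\ell,\psi](t) = 2^\ell v^\ell + O(\kappa_\ell(\bP)),
\]
whose norm is bounded below by a positive constant once $\kappa_\ell(\bP)$ is small enough. Passing to the limit via the uniform convergence $\partial\Phi^\ell[\bP^\ell,\psi] \to \partial\Phi[\bP]$ gives $\|\partial\Phi[\bP](t)\| > 0$. The main obstacle is (ii): lower bounds on the individual $\|\Delta p^\ell_j\|$ alone are not sufficient, because the signed weights of $\psi_1$ could cause cancellation; quantitative control of the direction spread of the first differences near $t$ is what rules this out.
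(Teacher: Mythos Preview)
Your argument is correct and parallels the paper's proof in its core estimates, but the final step differs in an instructive way. The paper does not work with an arbitrary linear $C^1$-companion; it chooses the specific family $\bA^\tau$ introduced in Section~\ref{sec:regular}, whose first-difference scheme has \emph{non-negative} coefficients, so that $\psi^\tau_1 \ge 0$. Then $\partial\Phi^\ell[\bP^\ell,\psi^\tau](t)$ is a genuine convex combination of four consecutive scaled differences $\Delta\bar p^\ell_i$, and it suffices to show that (5.15) $\liminf_\ell \min_i \|\Delta\bar p^\ell_i\| > 0$ and (5.16) $\lim_\ell \max_i \|\Delta^2\bar p^\ell_i\| = 0$: these two bounds force the convex hull of any four consecutive control points to stay away from the origin. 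Your step~(i) corresponds to (5.15) and your step~(ii) to (5.16); the difference is that you keep a possibly signed $\psi_1$ and compensate by an explicit direction-alignment argument, whereas the paper sidesteps cancellation altogether via the convexity coming from $\psi^\tau_1 \ge 0$. Your route is slightly more general (it does not rely on a non-negative refinable generator), while the paper's route is cleaner and avoids the bookkeeping of propagating a common reference vector $v^\ell$ across overlapping subchains. A small simplification in your write-up: you need not invoke persistence of constant subchains to secure $\mu > 0$; since $\ell_0$ is chosen with $\kappa_{\ell_0}(\bP) < \infty$, every subchain at level $\ell_0$ already lies in $\E^n_*$, hence is non-constant.
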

\begin{proof}
With $\psi^\tau$ the basic limit function
of the linear scheme $\bA^\tau$, we have
$\partial\Phi^\ell[\bP^\ell,\psi^\tau] = \Phi^\ell[\Delta\bbP^\ell,\psi^\tau_1]$,
where $\bbP^\ell := 2^\ell \bP^\ell$.
The coefficients of the derived scheme $\bA^\tau_1$ are non-negative
so that the corresponding basic function $\psi^\tau_1$ is non-negative, too.
Its support has length $4$ so that each point on the curve
$\Phi^\ell[\Delta\bbP^\ell,\psi^\tau_1]$ lies in the convex hull of always
four consecutive
control points $\Delta\bar p^\ell_i,\dots,\Delta\bar p^\ell_{i+3}$.
Now, we are going to prove
\begin{align}
\label{eq:lbnd}
	C_0 := 
  \liminf_{\ell \to \infty}
  \min_{i} \|\Delta\bar p^\ell_i\| &> 0 \\
\label{eq:ubnd}
	\lim_{\ell \to \infty}
  \max_{i} \|\Delta^2\bar p^\ell_i\| &= 0
	.
\end{align}
This means that, for $\ell$ sufficiently large, 
the control points are bounded away from the
origin, while their differences tend to zero, implying that
the convex hull of any $4$ consecutive control points does not
intersect the ball around the origin with radius $C_0/2$. 
Hence, neither the curves $\Phi^\ell[\Delta\bbP^\ell,\psi^\tau_1]$
nor their limit $\partial\Phi[\bP]$
intersect the interior of that ball, verifying the claim.

First, let $\bar c_1 := 4 \sqrt{n} c_1$. 
By \eqref{eq:D1}, \eqref{eq:Pi}, and $|\Pi \bq|_1 = |\be|_1 = 1$,
\[
  2|\Pi \bgl(\bq)|_1 \ge 2|\Pi \bgl(\be)|_1 - 4|\Pi (\bgl(\bq)-\bgl(\be))|_0
	\ge
	1 - 4 \sqrt{n} c_1 |\bd|_2
	=
	|\Pi \bq|_1(1 - \bar c_1 \kappa(\bq))
\]
for $\bq \in \Q^n[\delta_1]$.
Invariance under similarities yields
\begin{equation}
\label{eq:lowerbnd}  
  2\,|\Pi \bgl(\bp)|_1 \ge |\Pi \bp|_1 (1- \bar c_1\kappa(\bp))
	,\quad
	\bp \in \E^n[\delta_1]
	.
\end{equation}
Second, 
let $\ell_0 \in \N$ be chosen such that $\kappa_\ell(\bP) \le \min\{1/(2m), 1/(2 \bar c_1), \delta_1\}$
for all $\ell \ge \ell_0$.
We set $C := \min_i |\Pi T^n_i \bbP^{\ell_0}|_1$ and iterate \eqref{eq:lowerbnd}
to find 
\begin{equation}
\label{eq:lowerbnd1}
  \min_i |\Pi T^n_i \bbP^\ell|_1 \ge 
	C \prod_{j=\ell_0}^{\ell-1} (1 - \bar c_1 \kappa_j(\bP))
	\ge
	C \prod_{j=\ell_0}^{\infty} (1 - \bar c_1 \kappa_j(\bP))
  =: C'  
\end{equation}
for $\ell > \ell_0$, where $C' > 0$ because the sequence $\kappa_j(\bP)$
is essentially summable. 

Third, we note that 
\[
  \min_{0 \le j < n-1} \|\Delta q_j\| \ge 1 - |\bd|_1 \ge 1-m \kappa(\bq) \ge 1/2 = |\Pi \bq|_1/2
	,\quad
	\bq \in \Q^n[1/(2m)]
	.
\]
By invariance under similarities, 
\[
  \min_{0 \le j < n-1} ||\Delta p_j|| \ge |\Pi \bp|_1 /2
	,\quad
	\bp \in \E^n[1/(2m)]
	.
\]
Applying this estimate to \eqref{eq:lowerbnd1}, we obtain
\[
  \min_i \|\Delta \bar p_i^\ell \| \ge \min_i |\Pi T^n_i \bbP_\ell|_1 /2 \ge C'/2
	,\quad
	\ell > \ell_0
	,
\]
verifying \eqref{eq:lbnd}. 
Fourth, an analogous analysis to the above inequalities yields
\[
  2\,|\Pi \bgl(\bp)|_1 \le |\Pi \bp|_1 (1+ \bar c_1\kappa(\bp))
	,\quad
	\bp \in \E^n[\delta_1]
	.
\]
With $\ell_1 \in \N$ such that $\kappa_\ell(\bP) \le \delta_1$ for all $\ell \ge \ell_1$, we have
\[
  \max_i |\Pi T^n_i \bbP^\ell|_1 \le 
	\tilde{C} \prod_{j=\ell_1}^{\infty} (1 + \bar c_1 \kappa_j(\bP))
  =: \tilde{C}'
  ,
\]
where $\tilde{C} := \max_i |\Pi T^n_i \bbP^{\ell_1}|_1$ and $\tilde{C}' < \infty$ because the sequence $\kappa_j(\bP)$ is essentially summable.
Hence,
\[
  \kappa_\ell(\bP) = \max_i \frac{|T^n_i \bP^\ell|_2}{|\Pi T^n_i \bP^\ell|_1}
	= \max_i \frac{|T^n_i \bbP^\ell|_2}{|\Pi T^n_i \bbP^\ell|_1}
	\ge
	\frac{\max_i \|\Delta^2 \bar p^\ell_i\|}{\tilde{C}'}
	.
\]
Since $\kappa_\ell(\bP)$ is a null sequence, \eqref{eq:ubnd} follows
and the proof is complete.
\end{proof}


\section{Asymptotic analysis}
\label{sec:asymptotic}
In this section, we will relate higher order regularity properties of GLUE-schemes to
the derivatives $M_\lambda = D\bgl(\be), \lambda \in \{0,1\}$,
of $\bgl$ at $\be$ using the concept of the joint spectral radius. 
In general, the derivative $D\bgl$ of a self-map $\bgl :\E^n \to \E^n$
at the point $\bp \in \E^n$
is given by a set of $n \times n$ matrices $L_\lambda^{i,j}(\bp)$, each
of dimension $d \times d$,
acting on $\bp' =[p_0';\dots;p_{n-1}']\in \E^n$ according to
\[
  D\bgl(\bp) \cdot \bp' = 
  \begin{bmatrix}
  \sum_{j=0}^{n-1} p'_j L_\lambda^{0,j}(\bp) \\
	\vdots\\
  \sum_{j=0}^{n-1} p'_j L_\lambda^{n-1,j}(\bp) 
  \end{bmatrix}
  .
\]
If $\bgl$ commutes with similarities, the matrices $L_\lambda^{i,j}(\bp)$
have a special form.
Let $S = (\varrho,Q,s) \in \calS(\E)$ be any similarity according to
the specifications of the second section.
The invariance property (G) implies 
$D\bgl(S(\bp)) \cdot \bq Q = (D\bgl(\bp) \cdot \bq)Q$, and hence
\[
  Q\, L^{i,j}_\lambda(S(\bp)) = L^{i,j}_\lambda(\bp)\, Q
  ,\quad
  i,j = 0,\dots,n-1
  ,
  \lambda \in \{0,1\}
  .
\]
For $S = (\varrho,\id,s)$, we find
\(
\label{eq:Lij-1}
  L^{i,j}_\lambda(\varrho\bp+s) = L^{i,j}_\lambda(\bp)
  .
\)
That is, the derivative does not change when scaling or shifting
the argument. Further, $S = (1,Q,0)$ yields
\(
\label{eq:Lij-2}
  L^{i,j}_\lambda(\bp Q) = Q^{\rm t}\, L^{i,j}_\lambda(\bp)\, Q
  .
\)
By means of the last two displays, the derivative of $\bgl$ on $\LL^n_*$ is
completely determined by the derivative $M_\lambda$ at $\be$.
Now, we have to distinguish two cases:
\begin{itemize}
\item 
In the real-valued case $d=1$, the matrices $L^{i,j}$ are just scalars. Defining
the $(n \times n)$-matrices $A_0,A_1$ by $(A_\lambda)_{i,j} := L_\lambda^{i,j}(\be)$,
we obtain $M_\lambda \cdot \bp' = A_\lambda \bp'$. That is, the derivatives
at $\be$ are simply given by that pair of matrices. The real-valued case 
is excluded for the time being and
will be covered again by our considerations after the proof of Theorem~\ref{thm:C1AB}.
\item 
In the vector-valued case $d \ge 2$, things are more complicated.
This situation will be discussed now.
\end{itemize}
If $d \ge 2$, the structure of matrices $L_\lambda^{i,j}(\be)$ is
narrowed down as follows:
First,  
for $r = 2,\dots,d$, let $Q_r$ be the reflection changing the sign 
of the $r$th coordinate. Then $\be Q_r = \be$, and
the equalities
$L^{i,j}_\lambda(\be) = Q_r^{\rm t}\, L^{i,j}_\lambda(\be)\, Q_r$
imply that all $L^{i,j}_\lambda(\be)$ are diagonal matrices.
Second, for $r,s = 2,\dots,d$, let $Q_{r,s}$ be the reflection 
swapping the $r$th and $s$th coordinate.
Again, $\be Q_{r,s} = \be$, and
the equalities
$L^{i,j}_\lambda(\be) = Q_{r,s}^{\rm t}\, L^{i,j}_\lambda(\be)\, Q_{r,s}$
imply that all but the first entry on the diagonal of $L^{i,j}_\lambda(\be)$ 
coincide. That is, there exist real numbers $a^{i,j}_\lambda,b^{i,j}_\lambda$
such that
\[
  L^{i,j}_\lambda(\be) = 
  \begin{bmatrix}
  a^{i,j}_\lambda & 0 & \cdots & 0 \\
  0 & b^{i,j}_\lambda & \cdots & 0 \\
  \vdots &  \vdots    & \ddots &\vdots  \\
  0 & 0               & \cdots & b^{i,j}_\lambda
  \end{bmatrix}
  ,\quad
  \lambda \in \{0,1\}
  .
\]
Partitioning the $(n \times d)$-matrix $\bd$ into its first and the remaining columns,
\[
  \bd = [\bd_1, \bd_2]
  ,\quad
  \bd_1 \in \R^{n,1}
  ,\ 
  \bd_2 \in \R^{n,d-1}
  ,
\]
the image of $\bd$ under $M_\lambda$ can be written as
$M_\lambda \cdot \bd = [A_\lambda \bd_1, B_\lambda \bd_2]$,
where the coefficients of the $(n\times n)$-matrices $A_\lambda,B_\lambda$
are given by $a^{i,j}_\lambda,b^{i,j}_\lambda$, respectively.
Equally, 
\[
  M_\Lambda \cdot \bd := D\bgL(\be) = [A_\Lambda \bd_1, B_\Lambda \bd_2]
	,\quad
	\Lambda \in \calL 
	.
\]
The linear subdivision schemes 
corresponding to the pairs 
$(A_0,A_1)$ and $(B_0,B_1)$ of matrices 
are denoted by $\bA$ and $\bB$, respectively.
The following theorem shows that, in some sense, $\bG$
is at least as regular as the worse of $\bA,\bB$.
\begin{theorem}
\label{thm:C1AB}
If the chain $\bP$ is straightened by the GLUE-scheme $\bG$,
and if both associated linear schemes $\bA,\bB$ are almost $C^{1,\alpha}$,
then $\bG$ is almost $C^{1,\alpha}$ at $\bP$.
\end{theorem}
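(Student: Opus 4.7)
The plan is to invoke Theorem~\ref{thm:C1alpha} via Lemma~\ref{lem:straight}: to establish almost $C^{1,\alpha}$ regularity of $\bG$ at $\bP$, it suffices to show that for every $\alpha' < \alpha$, the chain $\bP$ is straightened by $\bG$ at rate $\alpha'$, and this in turn reduces to exhibiting $\ell$ and $\delta > 0$ with $\Gamma_\ell[\delta] \le 2^{-\ell\alpha'} < 1$. Since $\bP$ is straightened, some iterate $\bP^{\ell_0}$ lies in $\E^{\ge n}[\delta]$, and straightening of the tail transfers to $\bP$ itself, so only the bound on $\Gamma_\ell[\delta]$ is at issue.

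To obtain this bound, I would linearize $\bgL$ around $\be$. For $\bq = \be + \bd \in \Q^n[\delta]$, Lemma~\ref{lem:g(e)} yields $|\bgL(\bq) - \bgL(\be) - M_\Lambda\bd|_2 \le c_\ell |\bd|_2^{1+\nu}$, and since $\bgL(\be) \in \LL^n$ by \eqref{eq:scale_bgl}, $|\bgL(\be)|_2 = 0$, so the numerator of $\kappa(\bgL(\bq))$ satisfies $|\bgL(\bq)|_2 \le |M_\Lambda\bd|_2 + c_\ell|\bd|_2^{1+\nu}$. The block decomposition $M_\Lambda\bd = [A_\Lambda\bd_1, B_\Lambda\bd_2]$ together with the commutation of $A_\Lambda,B_\Lambda$ with the operator $\Delta^2$ produces local matrices $A_{2,\Lambda},B_{2,\Lambda}$ generated by the pairs defining the second difference schemes $\bA_2,\bB_2$, whose joint spectral radii are both bounded by $2^{-1-\alpha}$ thanks to the almost $C^{1,\alpha}$-hypothesis on $\bA,\bB$. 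Consequently, for any $\alpha' < \alpha$ there exist $c'$ and $\ell_1$ such that $|M_\Lambda\bd|_2 \le c'\, 2^{-\ell(1+\alpha')}\,|\bd|_2$ for all $\Lambda$ of length $\ell \ge \ell_1$.

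For the denominator, the equilinearity relation \eqref{eq:scale_bgl} gives $|\Pi\bgL(\be)|_1 = 2^{-\ell}$, while Lemma~\ref{lem:g(e)} combined with \eqref{eq:Pi} controls the perturbation as $O(\delta)$, yielding $|\Pi\bgL(\bq)|_1 \ge 2^{-\ell}(1 - c\delta)$. Combining the two estimates,
\[
  \Gamma_\ell[\delta] \le 2^{-\ell\alpha'}\,\frac{c' + c_\ell\, 2^\ell \delta^\nu}{1-c\delta}.
\]
For fixed $\ell$, choosing $\delta$ small enough makes the numerator essentially $c'$ and the denominator essentially $1$, giving $\Gamma_\ell[\delta] \le 2c'\, 2^{-\ell\alpha'}$. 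Choosing $\ell$ large then absorbs the constant into the exponential, yielding $\Gamma_\ell[\delta] \le 2^{-\ell\alpha''}$ for any $\alpha''<\alpha'$. Lemma~\ref{lem:straight} then provides straightening at rate $\alpha''$, and since $\alpha''$ can be made arbitrarily close to $\alpha$, Theorem~\ref{thm:C1alpha} gives almost $C^{1,\alpha}$ at $\bP$.

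The main obstacle is the bookkeeping behind the inequality $|M_\Lambda\bd|_2 \le c' 2^{-\ell(1+\alpha')}|\bd|_2$: one must verify that the local $n\times n$ matrices $A_\Lambda,B_\Lambda$ (which govern the action of $\ell$ steps of the linear companions on length-$n$ subchains) commute with $\Delta^2$ in such a way that the resulting matrices are truly products of the generators of $\bA_2,\bB_2$, so that the joint-spectral-radius bounds derived from the $C^{1,\alpha}$-hypothesis are applicable. Once this identification is carried out, the remainder is a routine combination of Lipschitz linearization with the decay rate of the linear companions.
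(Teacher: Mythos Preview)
Your proposal is correct and follows essentially the same route as the paper: bound $\Gamma_\ell[\delta]$ via the joint spectral radii of $\bA_2,\bB_2$ acting through $M_\Lambda\bd = [A_\Lambda\bd_1,B_\Lambda\bd_2]$, choose $\ell$ large and then $\delta$ small, and invoke Lemma~\ref{lem:straight} together with Theorem~\ref{thm:C1alpha}. The only cosmetic difference is that the paper packages the linearization through $\Gamma_\ell^*[0]$ of Lemma~\ref{lem:GammaBnd} and then uses continuity of $\delta\mapsto\Gamma_\ell^*[\delta]$ at $0$, whereas you obtain the same bound directly from the Taylor estimate \eqref{eq:D2}; the substance is identical.
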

\begin{proof}
Consider any H{\"o}lder exponent $\alpha' < \alpha$. 
First, we show that there exists $\ell \in \N$
such that
\(
\label{eq:step1}
  -\ell^{-1}\log_2 \Gamma_\ell^*[0] \ge (\alpha + \alpha')/2
\)
with $\Gamma_\ell^*$ as defined in Lemma~\ref{lem:GammaBnd}.
To compute $\Gamma_\ell^*[0]$, we note that \eqref{eq:scale_bgl} implies
$|\Pi\bgL(\be)|_1 = 2^{-\ell}$.
Further,
by lemmas~\ref{lem:GammaBnd} and \ref{lem:g(e)}, 
\[
  \bigl|M_\Lambda\bigr|_2
	=
	\max_{\bd \neq 0} 
	\frac{\bigl|[\Delta^2 A_\Lambda \bd_1, \Delta^2 B_\Lambda \bd_2]\bigr|_0}{|\bd|_2}
	=
	\max_{\bd \neq 0} 
	\frac{\bigl|[A_{2,\Lambda}\Delta^2\bd_1, B_{2,\Lambda} \Delta^2\bd_2]\bigr|_0}{|\bd|_2}
	,
\]
where $A_{2,\Lambda},B_{2,\Lambda}$ are products of matrices
corresponding to the derived schemes $\bA_2,\bB_2$, respectively.
Recalling  \eqref{eq:rho}, 
we define $\mu_\ell := \max\{\varrho_\ell(\bA_2),\varrho_\ell(\bB_2)\}$
and $\mu := \max\{\varrho(\bA_2),\varrho(\bB_2)\}$.
The entries of the $(n \times d)$-matrix in the numerator above cannot exceed
$\mu^\ell_\ell |\Delta^2\bd|_0 = \mu^\ell_\ell|\bd|_2$. 
Hence, $\bigl|M_\Lambda\bigr|_2 \le \mu_\ell^\ell \sqrt{d}$, implying
\[
  \Gamma_\ell^*[0] =
	\max_{\Lambda \in \calL^\ell} \frac{|M_\Lambda|_2}{|\Pi\bgL(\be)|_1}
	\le
	(2 \mu_\ell)^\ell \sqrt{d}
	.
\]
By assumption, both $\bA$ and $\bB$ are almost $C^{1,\alpha}$ so that,
by \eqref{eq:jsr}, $\log_2 2\mu \le -\alpha$.
Therefore,
\[
  \liminf_{\ell \to \infty} (-\ell^{-1}\log_2 \Gamma_\ell^*[0]) \ge
	-\limsup_{\ell\to \infty} (\log_2 2\mu_\ell + \ell^{-1} \log_2\sqrt{d})
	= -\log_2 2\mu \ge \alpha
	,
\]
showing that \eqref{eq:step1} holds true when choosing $\ell \in \N$ sufficiently large .

Second, we fix $\ell$ as found above. The expression
$-\ell^{-1} \log_2 \Gamma_\ell^*[\delta]$ is a continuous function of $\delta$ in a neighborhood
of $\delta=0$. Hence, choosing $\delta>0$ small enough, the function values
$-\ell^{-1} \log_2 \Gamma_\ell^*[\delta]$ and $-\ell^{-1} \log_2 \Gamma_\ell^*[0]$ differ by less
than $(\alpha-\alpha' )/2$, and we obtain
\[
  -\ell^{-1} \log_2 \Gamma_\ell^*[\delta] \ge -\ell^{-1} \log_2 \Gamma_\ell^*[0] - (\alpha-\alpha')/2
	\ge \alpha'
	.
\]
Third, let $\bP$ be a chain that is straightened by $\bG$. Then there exists
$s \in \N$ such that $\kappa_s(\bP) \le \delta$, i.e., $\bP^s \in \E^{\ge n}[\delta]$.
By Lemma~\ref{lem:straight}, $\bP^s$ is straightened by $\bG$ at rate $\alpha'$, and so is $\bP$.
With Theorem~\ref{thm:C1alpha}, we have that $\bG$ is $C^{1,\alpha'}$ at $\bP$.
\end{proof}
As we have seen, the
vector-valued case $d \ge 2$ leads to two linear schemes $\bA$ 
and $\bB$ corresponding, in some sense, to the tangential and the normal component
of the limit curve. In the real-valued case $d=1$, there is only a single
scheme $\bA$, given by a
pair of matrices $A_0,A_1$ representing the derivatives of $\bg_0,\bg_1$
at $\be$. This is possible also for arbitrary space dimension $d$ 
if the schemes $\bA$ and $\bB$ coincide.
Thus,
we terminate the special treatment of the vector-valued case $d \ge 2$,
return to the general setting $d \in \N$ and elaborate on the following
special case:
\begin{definition}
\label{def:quasi}
A GLUE-scheme $\bG$ is called {\em locally linear}
if there exist $(n \times n)$-matrices
$A_0,A_1$ such that
\[
  M_\lambda\cdot \bp' = A_\lambda \bp'
  ,\quad
  \bp' \in \E^n
  ,\
  \lambda \in \{0,1\}
  .
\] 
The linear subdivision scheme $\bA$ corresponding to the matrices $A_0,A_1$
is called the {\em linear companion} of $\bG$.
\end{definition}

For instance, as shown in \cite{BspSabin05}, CPS is locally linear,
and the four-point scheme is its linear companion. Trivially, real-valued schemes
are always locally linear. Since the case $d=1$ was excluded in
Theorem~\ref{thm:C1AB}, we state for the sake of completeness:
\begin{corollary}
If the chain $\bP$ is straightened by the locally linear GLUE-scheme $\bG$
and if its linear companion $\bA$ is almost $C^{1,\alpha}$,
then $\bG$ is almost $C^{1,\alpha}$ at $\bP$.
\end{corollary}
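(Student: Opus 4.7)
The plan is to reduce the statement to Theorem~\ref{thm:C1AB}, which already proves the analogous claim in the vector-valued case $d \ge 2$ under the stronger assumption that \emph{both} associated linear schemes $\bA,\bB$ are almost $C^{1,\alpha}$.

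For a locally linear GLUE-scheme in dimension $d \ge 2$, the derivative at $\be$ acts on $\bp'$ by the single matrix product $A_\lambda \bp'$, applied column-wise across the $d$ columns. Comparing with the block decomposition $M_\lambda \cdot \bd = [A_\lambda \bd_1, B_\lambda \bd_2]$ established in the general vector-valued setting, local linearity forces $B_\lambda = A_\lambda$. Thus both associated linear schemes coincide with the linear companion $\bA$, and Theorem~\ref{thm:C1AB} applies immediately to yield the claim.

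For the real-valued case $d = 1$, which was explicitly excluded from Theorem~\ref{thm:C1AB}, I would repeat its proof almost verbatim after replacing the block expression $[A_\Lambda \bd_1, B_\Lambda \bd_2]$ by the single product $A_\Lambda \bd$. Concretely, the bound
\[
	|M_\Lambda|_2 \le |A_{2,\Lambda}|_0 \le \varrho_\ell(\bA_2)^\ell
\]
follows from the intertwining identity $\Delta^2 A_\Lambda \bd = A_{2,\Lambda}\, \Delta^2 \bd$ exactly as in the original argument, only without the extraneous factor $\sqrt{d}$. Combined with $|\Pi \bgL(\be)|_1 = 2^{-\ell}$ from \eqref{eq:scale_bgl} and the joint spectral radius bound $\varrho(\bA_2) \le 2^{-1-\alpha}$ from \eqref{eq:jsr}, this yields $-\ell^{-1}\log_2\Gamma_\ell^*[0] \ge (\alpha+\alpha')/2$ for every $\alpha' < \alpha$ once $\ell$ is sufficiently large.

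Fixing such an $\ell$, continuity of $-\ell^{-1}\log_2\Gamma_\ell^*[\delta]$ at $\delta=0$ produces a small $\delta>0$ with $-\ell^{-1}\log_2\Gamma_\ell^*[\delta] \ge \alpha'$. Since $\bP$ is straightened by $\bG$, some iterate $\bP^s$ lies in $\E^{\ge n}[\delta]$, and Lemma~\ref{lem:straight} (via Lemma~\ref{lem:GammaBnd}) shows that $\bP^s$, and hence $\bP$, is straightened at rate $\alpha'$. Theorem~\ref{thm:C1alpha} then gives $C^{1,\alpha'}$-regularity at $\bP$, completing the proof. I do not anticipate a genuine obstacle: the corollary is a streamlined restatement of Theorem~\ref{thm:C1AB} in the situation where the tangential and normal companion schemes are forced to coincide, and the only adaptation is the minor book-keeping in the $d=1$ case.
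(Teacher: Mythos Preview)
Your proposal is correct and matches the paper's approach exactly: the paper notes that for $d\ge 2$ local linearity forces $\bA=\bB$ so Theorem~\ref{thm:C1AB} applies directly, and for the remaining case $d=1$ it explicitly says ``We skip the pending proof for the case $d=1$, which follows exactly the ideas used to establish Theorem~\ref{thm:C1AB}.'' Your write-up simply supplies those skipped details, with the expected simplification that the $\sqrt{d}$ factor disappears and only the single derived scheme $\bA_2$ enters the joint spectral radius estimate.
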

We skip the pending proof for the case 
$d=1$, which follows exactly the ideas used to establish Theorem~\ref{thm:C1AB}.

Equations \eqref{eq:Lij-1} and \eqref{eq:Lij-2} imply that
locally linear schemes have constant derivative on the space
of non-constant linear subchains,
\[
  D\bgl(\bp)\cdot \bp' = A_\lambda \bp'
  ,\quad
  \bp \in \LL^n_*
  ,\
  \bp' \in \E^n
  .
\]
Moreover, 
\[
  (1+\varepsilon)\bgl(\bp) =
  \bgl(\bp + \varepsilon\bp) = 
  \bgl(\bp) + \varepsilon A_\lambda \bp
  +O(\varepsilon^2)
  ,\quad
  \bp \in \LL^n_*
  ,
\]
shows that $\bgl(\bp) = A_\lambda \bp$. Hence,
the schemes $\bG$ and $\bA$ coincide on the space
of linear chains,
\(
\label{eq:G=A}
  \bG(\bP) = \bA \bP
  ,\quad
  |\bP|_2 = 0
  .
\)
For non-linear chains, the deviation is bounded as follows:
\begin{lemma}
\label{lem:RQLin}
Let $\bA$ be the linear companion of the locally linear
GLUE-scheme $\bG$. The remainder
\[
  \bR(\bP) := \bG(\bP)-\bA \bP
\]
is bounded by
\[
  |\bR(\bP)|_0 \le c_1 \kappa(\bP)^\nu \, |\bP|_2
  ,\quad
  \bP \in \E^{\ge n}[\delta_1]
  ,
\]
with $c_1,\delta_1$ as in Lemma~\ref{lem:g(e)}, and $\nu$ the
regularity parameter of $\bG$. 
\end{lemma}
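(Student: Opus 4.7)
\medskip

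The plan is to reduce the proof to a pointwise estimate on a single subchain and then invoke similarity invariance to pass from normalized chains to general ones. Fix a subchain $\bp \in T^n\bP$, so that $\kappa(\bp) \le \kappa(\bP) \le \delta_1$ and $|\bp|_2 \le |\bP|_2$. Since the points of $\bR(\bP)$ are exactly the components of the vectors $\bg_\lambda(T^n_i\bP) - A_\lambda T^n_i\bP$ for $\lambda \in \{0,1\}$, it suffices to establish the pointwise bound
\[
	|\bg_\lambda(\bp) - A_\lambda \bp|_0 \le c_1 \kappa(\bp)^\nu\, |\bp|_2
	,\quad
	\bp \in \E^n[\delta_1]
	,\
	\lambda \in \{0,1\}
	,
\]
and take the maximum over $\bp\in T^n\bP$.

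First I would establish this bound for a normalized chain $\bq = \be + \bd \in \Q^n[\delta_1]$. By local linearity, $M_\lambda \cdot \bd = A_\lambda \bd$, and because $\be \in \LL^n_*$ the identity \eqref{eq:G=A} (applied to the $n$-chain case) gives $\bg_\lambda(\be) = A_\lambda \be$. Combining these two identities,
\[
	\bg_\lambda(\bq) - A_\lambda \bq = \bg_\lambda(\bq) - \bg_\lambda(\be) - M_\lambda \cdot \bd
	.
\]
Applying Lemma~\ref{lem:g(e)} with $\ell=1$, $i=0$, $j=2$, I then get
\[
	|\bg_\lambda(\bq) - A_\lambda \bq|_0 \le c_1 |\bd|_2^{1+\nu}
	= c_1 \kappa(\bq)^\nu |\bq|_2
	,
\]
where the last equality uses \eqref{eq:kappa=d} together with $|\bq|_2 = |\bd|_2$.

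Next I transfer the bound from $\bq$ to an arbitrary $\bp \in \E^n[\delta_1]$ via the normalizing similarity $S = S_\bp = (\varrho, Q, s)$ with $S(\bp) = \bq$. The scheme $\bg_\lambda$ commutes with $S$ by property (G), and $A_\lambda$ commutes with $S$ because its rows sum to one (so $A_\lambda$ preserves affine combinations of points, i.e., $A_\lambda(\varrho \bp Q + s\mathbf{1}) = \varrho(A_\lambda\bp)Q + s\mathbf{1}$). Therefore
\[
	\bg_\lambda(\bq) - A_\lambda \bq = \varrho\, (\bg_\lambda(\bp) - A_\lambda\bp)\, Q
	,
\]
and taking norms yields $|\bg_\lambda(\bp)-A_\lambda\bp|_0 = \varrho^{-1} |\bg_\lambda(\bq)-A_\lambda\bq|_0$. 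Combined with $|\bq|_2 = \varrho |\bp|_2$ and the similarity-invariance \eqref{eq:rS=r} of $\kappa$, I obtain
\[
	|\bg_\lambda(\bp) - A_\lambda \bp|_0 \le c_1 \varrho^{-1} \kappa(\bp)^\nu \cdot \varrho |\bp|_2 = c_1 \kappa(\bp)^\nu |\bp|_2
	,
\]
and a final pass to subchains of $\bP$ finishes the proof.

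The only substantive step is the algebraic identification $\bg_\lambda(\bq) - A_\lambda\bq = \bg_\lambda(\bq) - \bg_\lambda(\be) - M_\lambda\cdot\bd$; everything else is bookkeeping. The main care needed is in verifying that $A_\lambda$ truly commutes with similarities—this is where the affine (row-sums-to-one) nature of a linear subdivision scheme, as observed for property (G) in the discussion following Definition~\ref{def:GLUE}, is essential.
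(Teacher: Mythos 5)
Your proof is correct and follows essentially the same route as the paper's: normalize a subchain by the similarity $S_\bp$, rewrite $\bg_\lambda(\bq)-A_\lambda\bq$ as $\bg_\lambda(\bq)-\bg_\lambda(\be)-M_\lambda\cdot\bd$ using \eqref{eq:G=A} and local linearity, bound this by $c_1|\bd|_2^{1+\nu}$ via \eqref{eq:D2}, and transfer back by similarity invariance of $\kappa$ and the scaling of $|\cdot|_2$. The only difference is that you spell out details the paper leaves implicit, such as why $A_\lambda$ commutes with similarities (rows summing to one) and the reduction from $\bP$ to its subchains.
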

\begin{proof}
For $\bp \in T^n\bP$, let $S_\bp$ be the similarity as introduced
in Section~\ref{sec:setup}, i.e., $S_\bp(\bp) = \bq = \be + \bd \in \Q^n$.
Then, by \eqref{eq:D2},\eqref{eq:G=A}, and \eqref{eq:kappa=d},
\[
	|S_\bp(\bgl(\bp)-A_\lambda\bp)|_0 
	=
	|\bgl(\be+\bd)-\bgl(\be) - A_\lambda \bd|_0
  \le
  c_1 |\bd|_2^{1+\nu}
	=
	c_1 \kappa(\bp)^\nu |S_\bp(\bp)|_2
	.
\]
Dividing this estimate by $|S_\bp|$ yields
$|\bgl(\bp)-A_\lambda\bp|_0\le c_1 \kappa(\bp)^\nu |\bp|_2$.
\end{proof}
Now, we are prepared to present our main result concerning
regularity of locally linear schemes. It states that, essentially,
second order H{\"o}lder regularity is passed on from $\bA$ to $\bG$
if the regularity parameter of $\bG$ is sufficiently large.
\begin{theorem}
Let $\bG$ be a locally linear
GLUE-scheme with linear companion $\bA$, spread $n$, and regularity parameter $\nu$.
If the chain $\bP \in \E^{\ge n}$ is straightened by $\bG$
and if $\bA$ is almost $C^{2,\alpha}$,
then $\bG$ is almost $C^{2,\beta}$ at $\bP$,
where $\beta := \min\{\alpha,\nu\}$.
\end{theorem}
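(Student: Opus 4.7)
The plan is to mirror the proof of Theorem~\ref{thm:C1alpha} one order higher, now exploiting the sharper proximity bound available in the locally linear setting (Lemma~\ref{lem:RQLin}). As before, I compare the approximation sequence $\Phi^\ell[\bP^\ell,\psi]$ built from the basic function $\psi$ of the linear companion $\bA$ against the iterated GLUE-scheme, coupling the two through the remainder $\bR(\bP):=\bG(\bP)-\bA\bP$.

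First, I would boost the straightening rate to values arbitrarily close to $1$. Since $\bA$ is almost $C^{2,\alpha}$, it is in particular $C^2$, so $\varrho(\bA_2)\le 2^{-2}$. Retracing the argument of Theorem~\ref{thm:C1AB} (applicable here with $\bA=\bB$ by local linearity) yields $\liminf_\ell(-\ell^{-1}\log_2\Gamma_\ell^*[0])\ge 1$, so that $\bP$ is straightened by $\bG$ at every rate $\alpha''<1$. Choose $\alpha''$ large enough that $\alpha''(1+\nu)>1$ and set $\nu_0:=\alpha''(1+\nu)-1$. Lemma~\ref{lem:halvening} then gives $|\bP^\ell|_2\le C\,2^{-\ell(1+\alpha'')}$, which combined with Lemma~\ref{lem:RQLin} produces
\[
|\bR(\bP^\ell)|_0 \le c_1\kappa_\ell(\bP)^\nu|\bP^\ell|_2 \le C\,2^{-\ell(1+\alpha''(1+\nu))}.
\]
Inserting this bound into Lemma~\ref{lem:proxy} with $j=2$ (admissible since $\bA$ is $C^2$) yields $|\partial^2(\Phi^{\ell+r}[\bP^{\ell+r},\psi]-\Phi^\ell[\bP^\ell,\psi])|_z \le C\sum_{i\ge\ell}2^{-i\nu_0}$, which is summable and vanishes as $\ell\to\infty$. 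Hence $\partial^2\Phi^\ell[\bP^\ell,\psi]$ is Cauchy on $I_z$, $\partial^2\Phi[\bP]$ exists, and $\bG$ is $C^2$ at $\bP$.

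For the H\"older step I pass to the telescoping representation $\Phi^{k+1}[\bP^{k+1},\psi]-\Phi^k[\bP^k,\psi]=\Phi^{k+1}[\bR(\bP^k),\psi]$ obtained from the refinement equation~\eqref{eq:refine} for $\bA$, and differentiate twice:
\[
\partial^2\Phi[\bP] = \partial^2\Phi^0[\bP^0,\psi] + \sum_{k=0}^\infty 2^{2(k+1)}\Phi^{k+1}[\Delta^2\bR(\bP^k),\psi_2].
\]
The key external input is that $\psi_2$ is $\alpha'$-H\"older for every $\alpha'<\alpha$; this is inherited from $\bA$ being almost $C^{2,\alpha}$, since $\psi^{(2)}$ is a finite linear combination of integer shifts of $\psi_2$, and this relation inverts using the compact support of $\psi_2$. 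The resulting generator estimate $\omega_z(\Phi^{k+1}[\mathbf{Y},\psi_2],h)\le C|\mathbf{Y}|_0\min((2^{k+1}h)^{\alpha'},1)$, applied termwise, gives
\[
\omega_z\bigl(2^{2(k+1)}\Phi^{k+1}[\Delta^2\bR(\bP^k),\psi_2],h\bigr) \le C\,2^{-k\nu_0}\min\bigl((2^k h)^{\alpha'},1\bigr).
\]
Splitting the sum at $k_\star:=\lceil-\log_2 h\rceil$ into two geometric series produces $\sum_k\le C\,h^{\min(\alpha',\nu_0)}$, and the initial summand $\omega_z(\partial^2\Phi^0[\bP^0,\psi],h)\le Ch^{\alpha'}$ follows from the same H\"older bound on $\psi_2$. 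Letting $\alpha'\uparrow\alpha$ and $\alpha''\uparrow 1$ (so $\nu_0\uparrow\nu$) yields $\omega_z(\partial^2\Phi[\bP],h)\le Ch^{\beta'}$ for every $\beta'<\beta=\min(\alpha,\nu)$, which is exactly the claim that $\bG$ is almost $C^{2,\beta}$ at $\bP$.

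The main obstacle is keeping both exponents intact in the final balance. A direct combination of the $C^0$-level bound from Lemma~\ref{lem:proxy} with a mean-value bound on $\omega_z(\partial^2\Phi^\ell[\bP^\ell,\psi],h)$ only delivers the harmonic-mean exponent $\alpha\nu/(\alpha+\nu)$, strictly smaller than $\min(\alpha,\nu)$. Recovering the sharp exponent requires retaining the level-by-level telescoping decomposition of $\partial^2\Phi[\bP]$ and exploiting the H\"older modulus of $\psi_2$ scale by scale, so that $\alpha$ and $\nu$ enter symmetrically and the bottleneck $\min(\alpha,\nu)$ emerges from the geometric-series split rather than from a product of exponents.
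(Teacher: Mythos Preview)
Your proposal is correct and follows a genuinely different route from the paper's proof.

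The paper proceeds by first deriving a third-difference bound $|\bP^\ell|_3 \le C\,2^{-\ell(2+\gamma)}$. This is obtained from the representation $\bP^\ell = \bA^{\ell-\ell_0}\bP^{\ell_0} + \sum_i \bA^{\ell-i-1}\bR(\bP^i)$, estimating each $|\bA^r_3\,\cdot\,|_0$ via the joint spectral radius $\varrho(\bA_3)\le 2^{-(2+\alpha)}$ and plugging in the remainder decay $|\bR(\bP^i)|_0\le C\,2^{-i(2+\mu)}$. The H\"older estimate for $\partial^2\Phi[\bP]$ then comes from splitting $\omega_z(\partial^2\Phi[\bP],h)$ into a tail $s_1$ (controlled by the Cauchy bound from Lemma~\ref{lem:proxy}) and a single-level term $s_2 = \omega_z(\partial^2\Phi^\ell[\bP^\ell,\psi],h)$, the latter bounded by a partition-of-unity trick that subtracts $\Delta^2 p_j^\ell$ to make $|\bP^\ell|_3$ appear.

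Your route bypasses third differences entirely. You keep the full multiscale telescoping $\partial^2\Phi[\bP] = \partial^2\Phi^0 + \sum_k 2^{2(k+1)}\Phi^{k+1}[\Delta^2\bR(\bP^k),\psi_2]$ and inject the information about $\bA_3$ through the H\"older regularity of $\psi_2$ rather than through a bound on $\Delta^3\bP^\ell$. The scale-by-scale estimate $\omega_z(\cdot,h)\le C\,2^{-k\nu_0}\min((2^kh)^{\alpha'},1)$ and the geometric split at $k_\star\approx -\log_2 h$ then deliver $h^{\min(\alpha',\nu_0)}$ directly. Both approaches ultimately draw the $\alpha$-contribution from the same source (the spectral radius of $\bA_3$), but yours packages it as a regularity property of the generator $\psi_2$, which makes the argument read more like a Littlewood--Paley estimate, while the paper stays closer to standard difference-scheme bookkeeping. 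A minor remark: your inversion argument for the H\"older regularity of $\psi_2$ is fine (the formal inverse of $(I-S)^2$ acts as a locally finite sum on compactly supported functions), though the cleaner route is to note that $\psi_2$ is the basic function of $\bar\bA_2$, whose first derived scheme has joint spectral radius $4\varrho(\bA_3)\le 2^{-\alpha}$. Also, the telescoping and remainder bounds only kick in for levels $\ell\ge\ell_0$ with $\kappa_\ell(\bP)\le\delta_1$; the finitely many initial levels are handled by absorbing them into the constant, just as in the paper.
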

\begin{proof}
Given any $\gamma\in(0,\beta)$, let $\mu := (\beta+\gamma)/2$.
Further, we abbreviate
$\bar\gamma := 2+\gamma, \bar\mu := 2+\mu$.
First, we derive bounds on $\bR(\bP^\ell)$ and $\Delta^3\bP^\ell$. 
Being almost $C^{2,\alpha}$, the scheme $\bA$ is also almost $C^{1,1}$.
Then we know from the proof of Theorem~\ref{thm:C1AB} that $\bP$
is straightened at rate $\alpha':= (1+\mu)/(1+\nu)<1$. 
By lemmas~\ref{lem:RQLin} and \ref{lem:halvening},
\(
\label{eq:R}
	|\bR(\bP^\ell)|_0 \le
	c_1 \kappa(\bP^\ell)^\nu |\bP^\ell|_2
	\le
	C 2^{-\ell(\alpha' \nu + 1 + \alpha')}
	= C 2^{-\ell\bar\mu}
\)
whenever $\bP^\ell \in \E^{\ge n}[\delta_1]$. However, since $\bP$
is straightened by $\bG$, there exists $\ell_0 \in \N$ such that
this is true for all $\ell \ge \ell_0$.
Fixing that $\ell_0$, we re-write \eqref{eq:G-A} in the form 
\[
  \bP^\ell = \bA^{\ell-\ell_0}\bP^{\ell_0} + \sum_{i=\ell_0}^{\ell-1} \bA^{\ell-i-1}\bR(\bP^i)
\]
and obtain 
\[
  |\bP^{\ell}|_3 \le
	|\bA^{\ell-\ell_0} \bP^{\ell_0}|_3 + \sum_{i=\ell_0}^{\ell-1} |\bA^{\ell-i-1} \bR(\bP^i)|_3 
	\le
	|\bA^{\ell-\ell_0}_3 \Delta^3\bP^{\ell_0}|_0 
	+ \sum_{i=\ell_0}^{\ell-1} |\bA_3^{\ell-i-1} \Delta^3\bR(\bP^i)|_0
\]
for all $\ell \ge \ell_0$.
Since $\bar\gamma < 2+\alpha$ and $\varrho(\bA_3) \le 2^{-(2+\alpha)}$, 
there exist constants $c$ and $c' = c/8$ such that
\[
	|\bA_3^r \Delta^3\bP'|_0 \le c' 2^{-r\bar\gamma} |\Delta^3 \bP'|_0
	\le c 2^{-r\bar\gamma} |\bP'|_0
	,\quad
	r \in \N_0
	,\
	\bP' \in \E^{\ge n}
	.
\]
Together, the last three displays yield
\begin{align*}
  |\bP^\ell|_3 &\le 
	c \Bigl(2^{(\ell_0-\ell)\bar\gamma}|\bP^{\ell_0}|_0 + 
	C \sum_{i=\ell_0}^{\ell-1} 2^{(i+1-\ell)\bar\gamma} 2^{-i\bar\mu}
	\Bigr)\\
	& = 
	c 2^{-\ell\bar\gamma} \Bigl(
	2^{\ell_0\bar\gamma}|\bP^{\ell_0}|_0 
	+ C 2^{\bar\gamma} \sum_{i=\ell_0}^{\ell-1} 2^{i(\bar\gamma-\bar\mu)}
	\Bigr)
	.
\end{align*}
Since $\bar\gamma<\bar\mu$, the sum $\sum_i$ is bounded independent of $\ell$. Hence, there
exists a constant $C'$ with
\(
\label{eq:D3P}
|\bP^\ell|_3 \le C' 2^{-\ell\bar\gamma}
	,\quad
	\ell \ge \ell_0
	.
\)
Second, we show that $\bG$ is $C^2$ at $\bP$.
By Lemma~\ref{lem:proxy} and \eqref{eq:R}, 
\(
\label{eq:D2Phi}
  \bigl|\partial^2(\Phi^{\ell+r}[\bP^{\ell+r},\psi] -
	\Phi^{\ell}[\bP^{\ell},\psi])\bigr|_z \le
  c\, \sum_{i=\ell}^\infty 2^{2i}|\bR(\bP^i)|_0
	\le
	c C \, \sum_{i=\ell}^\infty 2^{-i \mu}
	\le
	C' 2^{-\ell\mu}
  .
\)
Hence, $\partial^2\Phi^{\ell}[\bP^\ell,\psi]$ is a Cauchy sequence with limit $\partial^2\Phi[\bP]$.

Third, we determine the local H{\"o}lder regularity of $\partial^2\Phi[\bP]$.
Given $h \in (0,h_0]$, choose $\ell\in \N$ such that
$2^{-\ell} < h \le 2^{-\ell+1}$. If $h$ is small enough, 
$\ell$ is sufficiently large to fulfill the
estimates above.
The modulus of continuity of the second derivative of the limit curve is bounded by
\[
  \omega_z(\partial^2\Phi[\bP],h) \le 
  \omega_z(\partial^2(\Phi[\bP]-\Phi^\ell[\bP^\ell,\psi]),h)
  +
  \omega_z(\partial^2\Phi^\ell[\bP^\ell,\psi],h)
	=: s_1+s_2
  .
\]
By \eqref{eq:D2Phi},
the first summand $s_1$ is bounded by
\begin{align*}
  s_1
  \le
  2\, \bigl| \partial^2(\Phi[\bP]-\Phi^\ell[\bP^\ell,\psi]) \bigr|_z 
	\le
  2 C' 2^{-\ell\mu} 
	\le 
	2 C' h^\gamma
  .
\end{align*}
For the second one, we obtain by \eqref{eq:diff}
\[
  s_2 = \omega_z(\Phi^\ell[2^{2\ell} \Delta^2\bP^\ell,\psi_2],h)
	\le \sup_{2^{-\ell}<h\le 2^{-\ell+1}} 2^{2\ell} \max_{\tau\in I_{z+h/2}}
	\|\sigma(\tau,h)\|
	,
\]
where
\[
  \sigma(\tau,h) := 
	\sum_{i = 0}^{N^\ell-1} \bigl(\psi_2(2^\ell\tau_h^+ -i) - \psi_2(2^\ell\tau_h^--i) \bigr)
	\Delta^2 p_i^\ell
  ,\quad
  \tau_h^\pm := \tau \pm h/2
	.
\]
By \eqref{eq:partition}, the function $\psi_2$ constitutes a partition of unity.
For levels
$\ell \ge \ell_z(\psi_2)$, see Section~\ref{sec:regular} for the definition,
we obtain by~\eqref{eq:sum1}
\[
  \sigma(\tau,h) =
	\sum_{i = 0}^{N^\ell-1} \bigl(\psi_2(2^\ell\tau_h^+ -i) - \psi_2(2^\ell\tau_h^- -i) \bigr)
	(\Delta^2 p_i^\ell - \Delta^2 p_j^\ell)
  .
\]
The index $j$ is defined as the integer closest to $2^\ell \tau$, i.e.,
$|j-2^\ell \tau|\le 1/2$.
Then, with $r$ such that $\operatorname{supp} \psi_2 \subseteq [-r+3/2,r-3/2]$, 
we find $\psi_2(2^\ell\tau_h^\pm -i) = 0$ whenever $|i-j| > r$.
Hence,
\[
  \sigma(\tau,h) =
	\sum_{|i-j|\le r}  \bigl(\psi_2(2^\ell\tau_h^+ -i) - \psi_2(2^\ell\tau_h^- -i) \bigr)
	(\Delta^2 p_i^\ell - \Delta^2 p_j^\ell)
	.
\]
This expression can be estimated by \eqref{eq:Lebesgue} using the
bound $\|\Delta^2 p_i^\ell - \Delta^2 p_j^\ell\| \le r |\bP^\ell|_3$
and \eqref{eq:D3P},
\[
  \|\sigma(\tau,h)\| \le 2\bar\psi_2 r |\bP^\ell|_3
	\le 
	2\bar\psi_2 r C' 2^{-\ell\bar\gamma}
	= C''2^{-\ell\bar\gamma}
	.
\]
Hence, $s_2 \le C'' 2^{-\ell\gamma} \le C'' h^\gamma$, 
showing that $\omega_z(\partial^2\Phi[\bP],h) \le C h^\gamma$, as requested.
\end{proof}
Unfortunately, regularity of $\bA$ beyond $C^{2,1}$ is not
necessarily
inherited by $\bG$. As an example, consider the scheme
$\bG$, given by
\begin{align*}
  g_0(T_i^3\bP) &= 
  (5 p_{i-1} + 10 p_i + p_{i+1})/16 +
  \frac{\|\Delta^2 p_{i-1}\|}{\|p_{i+1}- p_{i-1}\|}\, \Delta^2 p_{i-1}\\
  g_1(T_i^3\bP) &= (p_{i-1} + 10 p_i + 5 p_{i+1})/16
\end{align*}
Its linear companion is the quartic B-spline scheme $\bA^{1/2}$, as
discussed in Section~\ref{sec:regular}, which is $C^{3,1}$, 
while numerical experiments
suggest that $\bG$ is not smoother than $C^{2,1}$.

\section{Conclusion}

We have presented a general framework for the analysis of geometric subdivision schemes.
It is related to standard linear $C^{1,\alpha}$-theory by considering the decay 
not of second differences, but of another
quantity (relative distortion) measuring the deviation from a linear behavior.
It is also related to known approaches for the analysis of manifold-valued
subdivision as it uses a proximity condition. However, for $C^{1,\alpha}$-analysis,
the linear reference scheme depends only on the shift of the given scheme, and is
indifferent otherwise. Unlike any other known analysis of non-linear schemes, 
our approach admits to establish H{\"o}lder regularity by a universal procedure,
which can be fully automated. This procedure consists of two steps: First,
an upper bound on the maximal relative distortion is determined together
with some parameter characterizing the H{\"o}lder exponent. This computation
can be done once and for all for a given scheme. Second, when a specific initial
chain is given, this chain is checked for compliance with the pre-computed bound.
If this bound is met, convergence and regularity of the limit are verified.
Otherwise, a few rounds of subdivision are applied to the initial data,
and then the check is repeated. If the check fails even after many subdivision steps,
then it is conceivable that the limit does not have the expected regularity for this
specific set of initial data\footnote{It should be noted that the regularity of the limit curve
may indeed depend on the initial data.
For instance, this phenomenon can be observed for median-interpolating subdivision \cite{NonlinYu05}, 
where non-monotonic data may yield non-differentiable limits.}.
In a forthcoming report, we will describe details of an implementation.

There are many obvious directions for future research. For instance, 
higher order H{\"o}lder regularity could be approached by defining appropriate proximity conditions
or by considering deviations from a circle instead of a straight line.
Further, we conjecture that Theorem~\ref{thm:C1AB} could be enhanced by showing that 
regularity is determined only by the normal scheme $\bB$, while the tangential scheme $\bA$
is irrelevant. When trying to further generalize the class of schemes, it would be
nice to drop property (E), but this seems to be a real challenge as it is not clear
at all how to choose an appropriate parametrization, or to get rid of a specific
parametrization at all and come up with a genuine geometric proof.

\bibliographystyle{alpha}
\bibliography{references}

\end{document}